\newcommand{\I}{{\bf 1}}
\newtheorem{proposition}{Proposition}[section]
\newtheorem{theorem}[proposition]{Theorem}
\newtheorem{corollary}[proposition]{Corollary}
\newtheorem{lemma}[proposition]{Lemma}
\newtheorem{remark}[proposition]{Remark}
\newcommand{\nc}{\newcommand}
\DeclareMathOperator{\vol}{V}
\nc{\BHd}{\mathbb{H}^d}
\nc{\R}{{\mathbb R}}
\nc{\BS}{{\mathbb S}}
\nc{\bS}{{\mathbb S}^{d-1}}
\nc{\N}{{\mathbb N}}
\nc{\BB}{{\mathbb B}}
\nc{\C}{{\mathbb C}}
\nc{\Z}{{\mathbb Z}}
\nc{\BP}{\mathbf{P}}
\nc{\BH}{\mathbb{H}}
\nc{\BQ}{\mathbf{Q}}
\nc{\bN}{{\mathbf N}}
\nc{\BX}{{\mathbb X}}
\nc{\BY}{{\mathbb Y}}
\nc{\cB}{{\mathcal B}}
\nc{\cE}{{\mathcal E}}
\nc{\cK}{{\mathcal K}}
\nc{\cC}{{\mathcal C}}
\nc{\cH}{{\mathcal H}}
\nc{\cL}{{\mathcal L}}
\nc{\cM}{{\mathcal M}}
\nc{\cR}{{\mathcal R}}
\nc{\cX}{{\mathcal X}}
\nc{\cY}{{\mathcal Y}}
\nc{\dint}{{\rm d}}
\nc{\D}{\Delta}
\nc{\g}{\gamma}
\nc{\cI}{\mathcal{I}}
\nc{\cZ}{\mathcal{Z}}
\nc{\cum}{\operatorname{cum}}
\nc{\sZ}{{\mathscr{Z}}}
\nc{\origin}{o}
\nc{\bI}{\mathbf{1}}
\nc{\E}{\mathbf{E}}
\newcommand{\defeq}{\mathrel{\mathop:}=}
\nc{\sfp}{\mathsf{p}}
\DeclareMathOperator{\arcosh}{arcosh}
\nc{\Vol}{V_d}
\nc{\one}{\mathbbm{1}}
\nc{\Ih}{{\mathcal{I}_d}}
\numberwithin{equation}{section}
\let\@fnsymbol\@alph
\begin{document}
\title{Visibility and intersection density \\for Boolean models in hyperbolic space}
\date{October 10, 2024}
\date{}

\renewcommand{\thefootnote}{\fnsymbol{footnote}}

\author{Tillmann B\"uhler\footnotemark[1],\, Daniel Hug\footnotemark[2] \, and Christoph Thäle\footnotemark[3]\,}

\footnotetext[1]{Institute of Stochastics, Karlsruhe Institute of Technology, tillmann.buehler@kit.edu}

\footnotetext[2]{Institute of Stochastics, Karlsruhe Institute of Technology, daniel.hug@kit.edu}

\footnotetext[3]{Faculty of Mathematics, Ruhr University Bochum, christoph.thaele@rub.de}

\maketitle

\begin{abstract}
For Poisson particle processes in hyperbolic space we introduce and study concepts analogous to the intersection density and the mean visible volume, which were originally {considered} in the analysis of Boolean models in Euclidean space. In particular, we determine a necessary and sufficient condition for the finiteness of the mean visible volume of a Boolean model in terms of the intensity and the mean surface area of the typical grain.

\medskip

{\bf Keywords}. {Boolean model, hyperbolic geometry, hyperbolic space, intersection density, intrinsic volume, mean visible volume, Poisson point process, stochastic geometry.}\\
{\bf MSC}. 52A22, 52A55, 60D05
\end{abstract}

\section{Introduction}

The Boolean model is a foundational concept in stochastic geometry that describes random spatial structures formed by the union of random shapes, called grains, which are placed at the points of a random point process. More precisely, the construction of a Boolean model usually begins with a point process, typically a Poisson point process, which defines the random locations in space where the grains are centered. Each grain, characterized by its shape, size, and orientation, is independently and randomly distributed according to a specified probability law on the space of compact subsets of the ambient space. By taking the union of all grains, the model generates a random set in space, capturing a rich variety of spatial patterns and structures. The Boolean model is particularly valued for its mathematical tractability and flexibility. Many key properties such as the {probability of complete coverage} or the mean volume fraction can be derived analytically. These features make the Boolean model a powerful tool for both theoretical studies and practical applications. We refer the reader to \cite{ChiuSKM,HLWSurvey,SW08}, for example.

The present paper investigates the intersection density and the mean visible volume in a Boolean model. In this context, the intersection density refers to the number of intersection points of the boundaries of any $d$ distinct particles with a given set, where $d$ denotes the dimension of the space in which the Boolean model is constructed. On the other hand, the mean visible volume is defined as the expected volume of the region that remains uncovered by the random grains and is visible from a fixed observation point, such as the origin, conditioned on the event that this point is not covered. Both quantities have been extensively studied for Boolean models in a $d$-dimensional \textit{Euclidean} space; see \cite[Section 4.6]{SW08} and \cite[Sections 4--6]{Wieacker}. In particular, the mean visible volume is always finite as soon as the grains are full-dimensional. In contrast, we focus on the intersection density and the mean visible volume of Boolean models in a $d$-dimensional \emph{hyperbolic} space $\BH^d$ of constant curvature $-1$; see Figure \ref{fig:BM} for a simulation. Also, here the Boolean model can be derived from a Poisson point process in $\BH^d$ with intensity $\gamma>0$, which is invariant under all isometries of $\BH^d$, but the probabilistic construction required for placing the grains at the points of the Poisson process is more involved. For this reason, alternative approaches have been proposed in \cite[Sections 2.2 and 2.3]{HLS2} and will be further developed here. Boolean models in hyperbolic space were studied in \cite{BenjaminiJonassonSchrammTykesson,HLS2,Lyons,Tykesson07,TykessonCalka} and give rise to a number of new phenomena compared to their Euclidean counterparts. In addition, the analysis of models in hyperbolic stochastic geometry often requires the development of new geometric tools, which are of independent interest. We will see examples for both effects also in the present text.

\begin{figure}[t]
    \centering
    \includegraphics[width=0.65\linewidth]{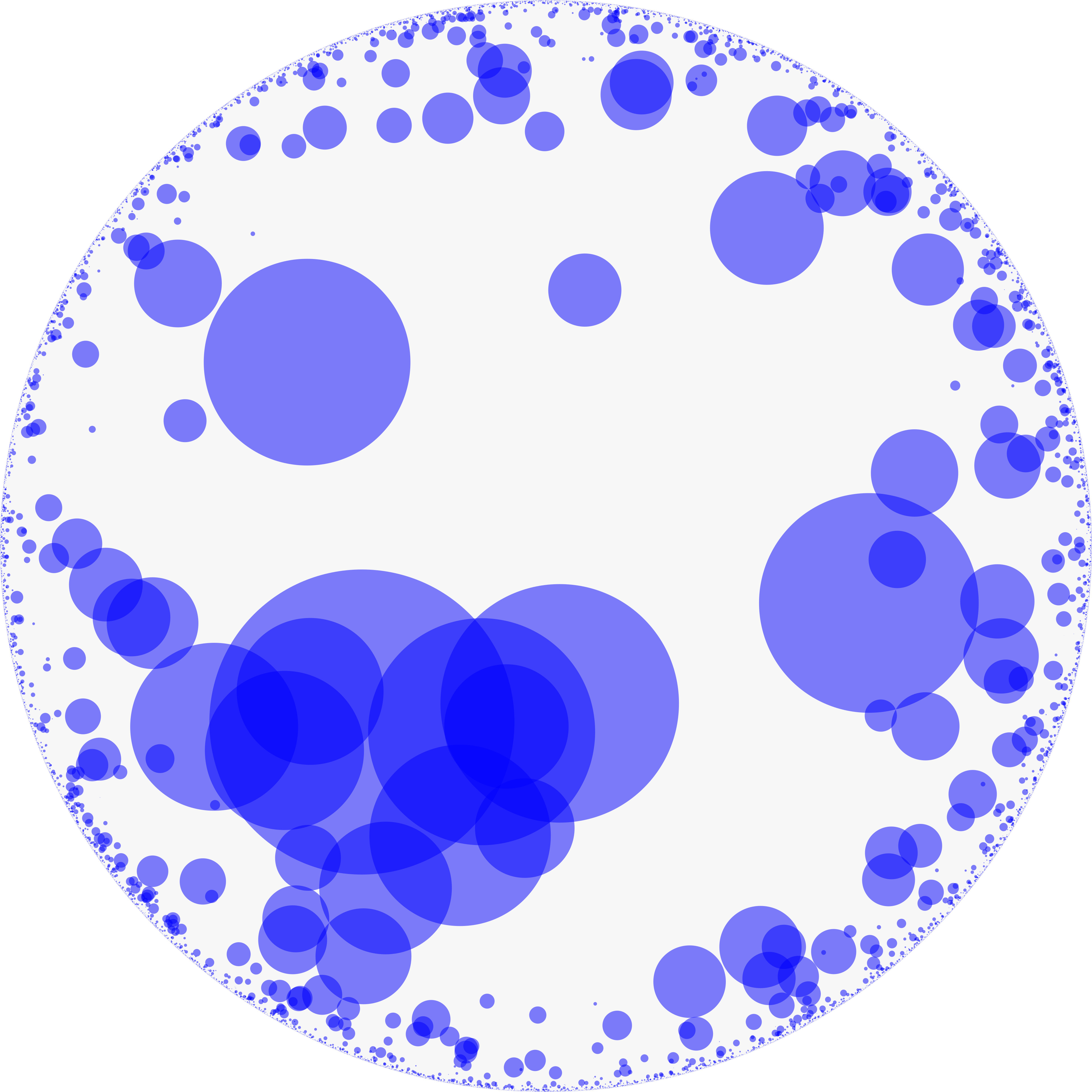}
    \caption{\small Simulation of a Boolean model in the hyperbolic plane whose grains are discs of random radius uniformly distributed in $[0,1]$. The saturation of a point represents the number of grains by which it is covered.}
    \label{fig:BM}
\end{figure}

It is known from \cite{Lyons} that the visibility properties of hyperbolic Boolean models exhibit a remarkable threshold phenomenon, which cannot be observed in Euclidean space. To be precise, the paper \cite{Lyons} deals with Boolean models in the $d$-dimensional hyperbolic space (and in fact more general spaces) whose grains are balls of fixed radius $R>0$. In this specific set-up, if we condition on the event that a fixed point $\sfp\in\BH^d$ is not covered by the grains of the Boolean model, then there exists a critical intensity parameter $\gamma_v=\gamma_v(R,d)>0$ such that:
\begin{itemize}
    \item for $\gamma<\gamma_v$ the grains are sparse  enough so that, {with positive probability}, one can find infinite geodesic rays emanating from $\sfp$ that remain unobstructed,

    \item for $\gamma>\gamma_v$ the grains become dense enough to almost surely obstruct all geodesics starting at $\sfp$.
\end{itemize}
The value for $\gamma_v$ is known and will be discussed in Remark \ref{rem:Visibility} below. More refined properties of the length of the geodesic rays starting at $\sfp$ are studied in \cite{TykessonCalka} (in dimension $d=2$).

We build upon these findings by exploring the \textit{mean visible volume} around a given point $\sfp$. Our primary result in this direction provides an explicit formula for the mean visible volume in terms of the intensity $\gamma$ and the mean surface content $v_{d-1}$ of the so-called typical grain. It applies to general space dimensions $d$ and general random hyperbolically convex grain shapes. A significant aspect of our result is the identification of a precise condition under which the mean visible volume remains finite. Specifically, we prove that the mean visible volume is finite if and only if the product $\gamma v_{d-1}$ exceeds the threshold value $(d-1)\frac{d\kappa_d}{\kappa_{d-1}}$, where $\kappa_j$ denotes the volume of the $j$-dimensional Euclidean unit ball. This condition reveals the interplay between the intensity of the underlying Poisson point process and the geometric properties of the grains that must be satisfied for a finite mean visible volume. 
In addition, we present an explicit formula for the intersection density of a $d$-dimensional hyperbolic Boolean model and establish a connection between this density and the mean visible volume. To further contextualize our findings, we draw a comparison with analogous results obtained for the zero cell of a stationary Poisson hyperplane tessellation in hyperbolic space.

\section{Preliminaries}

In this paper, we work in a $d$-dimensional hyperbolic space $\mathbb{H}^d$ of dimension $d\ge 2$ and with constant curvature $\kappa=-1$. We denote by $d_h(\,\cdot\,,\,\cdot\,)$ the hyperbolic distance, by $\cC^d$ the space of nonempty compact subsets of $\mathbb{H}^d$ and by $\cK^d$ the space of compact and (geodesically) convex subsets of $\mathbb{H}^d$ (including the empty set). The unimodular Lie group of isometries of $\BH^d$ is denoted by $\Ih$. It carries an (left and right) invariant Haar measure $\lambda$, which is unique up to normalization. We normalize $\lambda$ so that, for any $x\in \BH^d$, 
\begin{equation}\label{eq:normalize}
\int_{\Ih}{\bf 1}\{\varrho x\in\,\cdot\,\}\,\lambda(\dint\varrho)=\mathcal{H}^d(\,\cdot\,),
\end{equation}
where $\mathcal{H}^d$ denotes the $d$-dimensional Hausdorff measure constructed with respect to $d_h$. More generally, for $s\geq 0$ we denote by $\mathcal{H}^s$ the $s$-dimensional Hausdorff measure with respect to the hyperbolic distance $d_h$. Instead of $\mathcal{H}^d$, we often use $V_d$ to denote the volume functional on  compact convex sets in $\mathbb{H}^d$. For $A\in \cC^d$ and $x\in\mathbb{H}^d$, we set $d_h(A,x)\defeq\min\{d_h(a,x):a\in A\}$. Moreover, we set $d_h(\varnothing,x)\defeq\infty$.

For $A\in\cK^d$  and $r\ge 0$, let $\BB(A,r)=\{x\in\BHd\colon d_h(A,x)\le r\}$
 be the parallel set of  $A$  at distance $r$. For $j\in\{0,\ldots,d-1\}$ and $r\ge 0$, let
$$
l_{d,j}(r)\defeq\binom{d-1}{j}\int_0^r \cosh^j(t)\sinh^{d-1-j}(t)\, \dint t.
$$
We note that the functions $l_{d,0},\ldots,l_{d,d-1}$ are linearly independent in the real vector space of continuous functions on $[0,\infty)$.  
 The Steiner formula \eqref{eq:Steiner} involves the existence of (uniquely determined) continuous, isometry invariant, and additive functionals $V_0,\ldots,V_{d-1}$ on $\cK^d$  
such that
\begin{align}\label{eq:Steiner}
V_d(\BB(A,r))=V_d(A)+\sum_{j=0}^{d-1} V_j(A)\cdot l_{d,j}(r),\quad r\ge 0;
\end{align}
a more general (local) version of \eqref{eq:Steiner} is provided in \cite[Theorem 2.7]{Kohlmann} (note that the result in \cite{Kohlmann} is stated in $\mathbb{H}^{d+1}$ and the coefficient functions are normalized differently). The normalization in \eqref{eq:Steiner} is chosen as in \cite[Equation (3.3)]{HLS2} and consistent with the normalization in \cite[Equation (4.12)]{Schneider2014} in the Euclidean framework. It is the natural choice of normalization, if $V_j(A)$ is considered as a curvature integral involving a normalized elementary symmetric function (of order $d-1-j$) of principal curvatures of $\partial A$; see \cite[Equation (1.11)]{Sol} for the case of parallel sets of smooth  hypersurfaces in hyperbolic space. Note that $V_j(\varnothing)=0$ for $j\in\{0,\ldots,d\}$. 
 
If $A$ is convex and has nonempty interior, then
\begin{equation}\label{eq3.2Vnm1}
V_{d-1}(A)=\mathcal{H}^{d-1}(\partial A).
\end{equation}
We note that in the Euclidean case the corresponding relation involves an additional factor $1/2$ on the right-hand side, due to the different normalization of $V_{d-1}$. However, if $A\subset \BHd$ is compact, convex, and has dimension at most $d-1$, then $V_{d-1}(A)=2\mathcal{H}^{d-1}(\partial A)$. If the dimension of $A$ is at most $d-2$, then both sides of \eqref{eq3.2Vnm1} vanish. Moreover, $V_0(A)$ is the (total) Gauss curvature integral of $A$ (which follows as a special case of \cite[Theorem 2.7 and (2.7)]{Kohlmann}). In Euclidean space the normalization of $V_0$ is chosen such that $V_0$ is equal to the Euler characteristic $\chi$, whereas in hyperbolic space the relation between $V_0$ and the Euler characteristic $\chi$ is non-trivial and will be described below. 

A subset $A\subseteq\mathbb{H}^d$ is said to be $k$-rectifiable, for some $k\in\{0,\ldots,d\}$, if $k=0$ and $A$ is finite or if $k\ge 1$ and $A$ is the image of a bounded subset of $\R^k$ under a Lipschitz map. We say that $A$ is Hausdorff $k$-rectifiable if $\mathcal{H}^k(A)<\infty$ and there exist $k$-rectifiable subsets $B_1,B_2,\ldots$ of $\BH^d$ such that $\mathcal{H}^k\left(A\setminus \bigcup_{i\ge 1}B_i\right)=0$. For these definitions, see also \cite[Section 3.2.14]{Federer69} and \cite[Definitions 5.12 and 5.13]{Brothers66}. In particular, every bounded subset of $\BH^d$ is $d$-rectifiable.

\section{Hyperbolic intrinsic volumes}

It is a well-known fact that the Euclidean intrinsic volumes are intrinsic in the sense that they do not depend on the dimension of the ambient space.
That is, if \(1\le k \leq d\), $A\subset\R^k$ is a  compact convex set and \(\iota \colon \R^k \to \R^d\) is an isometric embedding of \(\R^k\) into \(\R^d\) (such as the one mapping $x\in\R^k$ to $(x,0,\ldots,0)$ in $\R^d$), then
\[ V^{\R^d}_j(\iota(A)) = \begin{cases}
    V^{\R^k}_j(A), & j=0,\ldots,k, \\
    0,& j= k+1,\ldots,d,
\end{cases} \]
where \(V^{\R^d}_j\) and \(V^{\R^k}_j\) denote the \(j\)-th intrinsic volume of a set measured in \(\R^d\) and \(\R^k\), respectively.
For this reason, they are called \emph{intrinsic} volumes, and the ambient space is usually omitted in the notation (for details, see \cite[{Equations} (4.1) and (4.22)]{Schneider2014} or \cite[pp.~106-108]{HugWeil2020}).

With a different normalization, the same property holds for the hyperbolic intrinsic volumes as well.
Define for $j=0,\ldots,d-1$ the functions \(\ell_{d,j}\) by
\[ \ell_{d,j}(r) \defeq \omega_{d-j} \int_0^r \cosh^j(t) \sinh^{d-1-j}(t) \,\dint t,\qquad r\geq 0, \]
where $ \omega_i=2 \pi^{i/2}/\Gamma(i/2)=i\kappa_i$ denotes the surface area of the \(i\)-dimensional {Euclidean} unit ball for $i\in\N$. 
For \(A \in \cK^d\)  put \(V^{\BHd}_d(A) \defeq \mathcal{H}^d(A)\) and define the intrinsic volumes \(V^{\BHd}_0,\ldots,V^{\BHd}_{d-1}\) as the coefficients in the Steiner formula
\begin{equation}\label{eq:Steinerhyp}
\mathcal{H}^d(\BB(A,r)) = \mathcal{H}^d(A) + \sum_{j=0}^{d-1} V^{\BHd}_j(A)\cdot \ell_{d,j}(r) ,\qquad r\geq 0. 
\end{equation}
Note that \eqref{eq:Steinerhyp} remains true for $d=1$ with $V_0^{\mathbb{H}^1}(A)=1$ if $A\in \cK^1$ satisfies $A\neq\varnothing$.

In the following, a $k$-dimensional totally geodesic subspace of $\mathbb{H}^d$ is called a $k$-plane for $1\le k\le d-1$.
We denote the space of all \(k\)-planes by \(A_h(d,k)\).

\begin{proposition}\label{prop:intrinsic_hyp}
    Let \(1 \leq k \leq d-1\) and identify \(\BH^k\) with an arbitrary \(L_k \in A_h(d,k)\) via an isometry \(\iota \colon \BH^k \to \BH^d\) such that $\iota(\BH^k)=L_k$. 
    If \(A \in \cK^k\), then
    \[ V^{\BH^d}_j(\iota(A)) = \begin{cases}
    V^{\BH^k}_j(A), & j=0,\ldots,k, \\
    0,& j\ge  k+1 .
\end{cases} \]
\end{proposition}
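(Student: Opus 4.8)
The plan is to compute $\mathcal H^d(\BB(\iota(A),r))$ by fibering $\BH^d$ over $L_k$ with Fermi coordinates, to reduce the outcome to the $k$-dimensional Steiner formula \eqref{eq:Steinerhyp} applied to $A$ inside $L_k\cong\BH^k$, and then to identify the intrinsic volumes via the linear independence of $\ell_{d,0},\dots,\ell_{d,d-1}$ on $[0,\infty)$ (equivalent to that of $l_{d,0},\dots,l_{d,d-1}$ noted in the preliminaries, since $\ell_{d,j}$ is a positive multiple of $l_{d,j}$). Since $\iota(A)\subseteq L_k$ has dimension at most $k\le d-1$, we have $\mathcal H^d(\iota(A))=0$, so $V^{\BH^d}_d(\iota(A))=0$ (this is the case $j=d$ of the vanishing assertion) and \eqref{eq:Steinerhyp} in $\BH^d$ reduces to $\mathcal H^d(\BB(\iota(A),r))=\sum_{j=0}^{d-1}V^{\BH^d}_j(\iota(A))\,\ell_{d,j}(r)$. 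It therefore suffices to prove that $\mathcal H^d(\BB(\iota(A),r))=\sum_{j=0}^{k}V^{\BH^k}_j(A)\,\ell_{d,j}(r)$ for all $r\ge 0$ and then to compare coefficients.

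Two geometric facts drive the computation. First, because $L_k$ is totally geodesic, for $x\in\BH^d$ with nearest point $p=\pi_{L_k}(x)\in L_k$ and any $a\in L_k$ the geodesic triangle $x,p,a$ has a right angle at $p$, so the hyperbolic Pythagorean theorem gives $\cosh d_h(x,a)=\cosh d_h(x,p)\cosh d_h(p,a)$. Minimizing over $a\in A$ — where, by total geodesy of $L_k$ and convexity of $A$, the distance $d_h(p,A)$ computed inside $L_k$ agrees with the ambient one — yields
\[
x\in\BB(\iota(A),r)\iff \cosh d_h(x,L_k)\cdot\cosh d_h(\pi_{L_k}(x),A)\le\cosh r .
\]
Second, the normal exponential map of the complete totally geodesic $k$-plane $L_k$ is a diffeomorphism onto $\BH^d$ away from the $\mathcal H^d$-null set $L_k$, and since the second fundamental form of $L_k$ vanishes, in coordinates $(p,u,s)$ with $p\in L_k$, $u$ a unit normal at $p$ and $s=d_h(x,L_k)$, the volume element is $\cosh^k(s)\sinh^{d-k-1}(s)\,\dint s\,\sigma(\dint u)\,\mathcal H^k(\dint p)$, where $\sigma$ is the uniform measure of total mass $\omega_{d-k}$ on the unit normal sphere $S^{d-k-1}$; this is the standard tube Jacobian in a space form and follows from the Jacobi field equation ($k$ tangential directions contribute $\cosh$, the $d-k-1$ spherical directions contribute $\sinh$). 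As the condition above does not involve $u$, integrating it out produces the factor $\omega_{d-k}$, and slicing in $s$ gives
\[
\mathcal H^d(\BB(\iota(A),r))=\omega_{d-k}\int_0^r\cosh^k(s)\sinh^{d-k-1}(s)\,\mathcal H^k\!\big(\BB^{\BH^k}(A,\rho(s))\big)\,\dint s,\qquad \cosh\rho(s)=\frac{\cosh r}{\cosh s},
\]
where $\BB^{\BH^k}(A,\rho)$ denotes the parallel body of $A$ formed in $\BH^k$.

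Inserting the Steiner formula in $\BH^k$, namely $\mathcal H^k(\BB^{\BH^k}(A,\rho))=V^{\BH^k}_k(A)+\sum_{j=0}^{k-1}V^{\BH^k}_j(A)\,\ell_{k,j}(\rho)$, reduces the desired expansion to two identities between coefficient functions. The top-term identity $\omega_{d-k}\int_0^r\cosh^k(s)\sinh^{d-k-1}(s)\,\dint s=\ell_{d,k}(r)$ is immediate from the definition of $\ell_{d,k}$. The crux is the identity, valid for $0\le j<k$,
\[
\omega_{d-k}\int_0^r\cosh^k(s)\sinh^{d-k-1}(s)\,\ell_{k,j}(\rho(s))\,\dint s=\ell_{d,j}(r),\qquad \cosh\rho(s)=\frac{\cosh r}{\cosh s},
\]
which I would establish geometrically rather than by a substitution: fix nested totally geodesic planes $L_j\subset L_k\subset\BH^d$ and a Borel set $C\subseteq L_j$ with $\mathcal H^j(C)=1$, and compute the volume of $\{x\in\BH^d:d_h(x,L_j)\le r,\ \pi_{L_j}(x)\in C\}$ in two ways — once by Fermi coordinates around $L_j$ directly (which gives $\ell_{d,j}(r)$), once by first fibering over $L_k$ and then, inside $L_k\cong\BH^k$, over $L_j$ (which gives the left-hand side) — using $\pi_{L_j}=\pi_{L_j}\circ\pi_{L_k}$ together with the Pythagorean relation $\cosh d_h(x,L_j)=\cosh d_h(x,L_k)\cosh d_h(\pi_{L_k}(x),L_j)$. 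Putting everything together yields $\mathcal H^d(\BB(\iota(A),r))=\sum_{j=0}^{k}V^{\BH^k}_j(A)\ell_{d,j}(r)$; comparing this with the $\BH^d$ Steiner expansion and invoking linear independence of $\ell_{d,0},\dots,\ell_{d,d-1}$ forces $V^{\BH^d}_j(\iota(A))=V^{\BH^k}_j(A)$ for $0\le j\le k$ and $V^{\BH^d}_j(\iota(A))=0$ for $k<j\le d-1$, which with $V^{\BH^d}_d(\iota(A))=0$ is the assertion. The main obstacles I anticipate are the careful bookkeeping in the tube Jacobian (which directions contribute $\cosh$ versus $\sinh$) and the coefficient identity for $j<k$, which the double-counting above settles without any unpleasant integral manipulation.
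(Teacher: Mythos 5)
Your proposal is correct, and its skeleton coincides with the paper's: the hyperbolic Pythagorean theorem for the distance to $\iota(A)\subseteq L_k$, a disintegration of $\cH^d$ over $L_k$ with fiber Jacobian $\cosh^k(s)\sinh^{d-k-1}(s)$ (which is exactly what the paper's Lemma \ref{lem:IntegrationFormulas} (a) combined with (b) delivers), insertion of the Steiner formula in $\BH^k$, and comparison of coefficients via the linear independence of $\ell_{d,0},\ldots,\ell_{d,d-1}$. Where you genuinely diverge is in two places. First, you slice by the normal distance $s$ and integrate $\cH^k\bigl(\BB^{\BH^k}(A,\rho(s))\bigr)$ against $\omega_{d-k}\cosh^k(s)\sinh^{d-k-1}(s)\,\dint s$, whereas the paper integrates over the base point $y\in L_k$ first and is then forced into a Riemann--Stieltjes argument to identify $\int_{\BB(A,r)\setminus A}f(d_h(y,A))\,\cH^k(\dint y)$ with $\int_0^r f(s)g'(s)\,\dint s$; your ordering makes that step unnecessary (the two forms are equivalent by Fubini on $\{(s,t):\cosh s\cosh t\le\cosh r\}$). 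Second, and more substantially, the crux identity
\[
\omega_{d-k}\int_0^r\cosh^k(s)\sinh^{d-k-1}(s)\,\ell_{k,j}\Bigl(\arcosh\Bigl(\frac{\cosh r}{\cosh s}\Bigr)\Bigr)\dint s=\ell_{d,j}(r),\qquad 0\le j<k,
\]
is the (Fubini-equivalent) counterpart of the paper's Lemma \ref{prop:ell_identity}, which the paper proves analytically over more than a page (differentiation under the integral with a separate treatment of the singular case $k=d-1$, a substitution, and beta-function identities). Your double-counting of $\cH^d\bigl(\{x:d_h(x,L_j)\le r,\ \pi_{L_j}(x)\in C\}\bigr)$ over nested totally geodesic planes $L_j\subset L_k\subset\BHd$, using $\pi_{L_j}=\pi_{L_j}\circ\pi_{L_k}$ and the multiplicative Pythagorean relation, replaces that computation by a transparent geometric argument (with the convention that $L_0$ is a point and the Fermi coordinates around it are polar coordinates). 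The price is that you must take the tube volume element for a totally geodesic $k$-plane as known; this is standard Jacobi-field material, but in the paper it is exactly the content that Lemma \ref{lem:IntegrationFormulas} establishes from scratch via the Beltrami--Klein model, so a fully self-contained write-up would still need that input. No gap otherwise; only note that the empty set should be excluded (or treated trivially) and that $V_d^{\BHd}(\iota(A))=\cH^d(\iota(A))=0$ needs to be recorded, as you do.
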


\begin{remark}\label{rem:propintrinsic} 
{\rm For $A\in\cK^d$ and $d\ge 1$, a comparison of \eqref{eq:Steiner} and \eqref{eq:Steinerhyp} yields that
$$
\omega_{d-j}V_j^{\mathbb{H}^d}(A)=\binom{d-1}{j}V_j(A), \quad j= 0,\ldots,d-1,
$$
and \(V_d^{\mathbb{H}^d}(A) = V_d(A)\).
If $I\subset \cK^d$ is a geodesic segment, then Proposition \ref{prop:intrinsic_hyp} implies that 
\begin{alignat}{2}
V_0^{\mathbb{H}^d}(I) & = 1, \quad & V_1^{\mathbb{H}^d}(I) & = \mathcal{H}^1(I), \nonumber
\shortintertext{and therefore}
V_0(I) & = \omega_d, \quad & V_1(I) & = \kappa_{d-1} \mathcal{H}^1(I).
\nonumber
\end{alignat}
}
\end{remark}

In addition to the Steiner formula, the proof of Proposition \ref{prop:intrinsic_hyp} requires two integration formulas in hyperbolic space: an orthogonal disintegration formula that can be seen as a hyperbolic analogue of Fubini's theorem and the hyperbolic polar integration formula.

\begin{lemma}\label{lem:IntegrationFormulas}
\begin{itemize}
    \item[{\rm (a)}] If \(1 \leq k \leq d-1\) and \(L_k \in A_h(d,k)\) is a fixed $k$-plane, then 
    \[ \int_{\BHd} f(x) \,\cH^d(\dint x) = \int_{L_k} \int_{L_k^\perp(y)} f(z)\,\cosh^k(d_h(z,L_k))  \,\cH^{d-k}(\dint z) \,\cH^k(\dint y), \]
    for any measurable function \(f \colon \BHd \to [0,\infty]\),  
    where \(L_k^\perp(y)\) is the unique \((d-k)\)-plane  orthogonal to \(L_k\) that intersects \(L_k\) in \(y\) and $d_h(z,L_k)=d_h(z,y)$ stands for the geodesic distance of $z$ to $L_k$.

    \item[{\rm (b)}] If \(d \geq 1\) and  \(f \colon \BHd \to [0,\infty]\) is measurable, then
    \[ \int_{\BHd} f(x) \,\cH^d(\dint x) = \int_{\BS^{d-1}_\sfp} \int_0^\infty f(\exp_\sfp(tu)) \sinh^{d-1}(t) \,\dint t \,\cH_\sfp^{d-1}(\dint u), \]
    where \(\BS^{d-1}_\sfp\) and \(\cH_\sfp^{d-1}\) refer to the unit sphere and the \((d-1)\)-dimensional Hausdorff measure in the tangent space of $\mathbb{H}^d$ at \(\sfp\), respectively.
\end{itemize}
\end{lemma}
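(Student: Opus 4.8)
The plan is to pass to the hyperboloid model $\BHd=\{x\in\R^{1,d}\colon\langle x,x\rangle=-1,\ x_0>0\}$, where $\langle x,y\rangle=-x_0y_0+x_1y_1+\dots+x_dy_d$ and the restriction of $\langle\cdot,\cdot\rangle$ to each tangent space $T_x\BHd=x^{\perp}$ is the hyperbolic metric $g$ of curvature $-1$. I would use two standard facts without further comment: the Hausdorff measure $\cH^s$ of an $s$-dimensional submanifold of a Riemannian manifold equals its intrinsic Riemannian volume, so in particular $\cH^d$ is the volume measure of $g$ and $\cH^{d-k}$ restricted to a $(d-k)$-plane equals the Riemannian volume of that plane (which is isometric to $\BH^{d-k}$), see \cite{Federer69}; and both identities are covariant under $\Ih$, so in (a) it suffices to treat one distinguished $k$-plane. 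Granting this, (a) and (b) reduce to choosing an explicit global parametrization of $\BHd$ and computing its volume Jacobian.

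I would prove (b) first, since the proof of (a) uses it. Put $\sfp=e_0$, so $T_\sfp\BHd=e_0^{\perp}=\{0\}\times\R^d$ and $\BS^{d-1}_\sfp$ is the Euclidean unit sphere therein. Every $x\in\BHd\setminus\{\sfp\}$ has a unique representation $x=\exp_\sfp(tu)=\cosh(t)\,\sfp+\sinh(t)\,u$ with $t>0$, $u\in\BS^{d-1}_\sfp$, and $d_h(x,\sfp)=t$. Differentiating and using $\langle\sfp,\sfp\rangle=-1$, $\langle u,u\rangle=1$, and $\langle\sfp,u\rangle=\langle\sfp,\partial_\theta u\rangle=\langle u,\partial_\theta u\rangle=0$, one finds that the pulled-back metric is the warped product $\dint t^2\oplus\sinh^2(t)\,g_{\BS^{d-1}}$, so the volume element is $\sinh^{d-1}(t)\,\dint t\,\cH^{d-1}_\sfp(\dint u)$; integrating over the full-measure set $\BHd\setminus\{\sfp\}$ gives (b). (For $d=1$ the sphere is two points and the identity is checked directly.)

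For (a), by covariance take $L_k=\BHd\cap E$ with $E=\mathrm{span}(e_0,\dots,e_k)$ and set $E^{\perp}=\mathrm{span}(e_{k+1},\dots,e_d)$, a spacelike subspace. The parametrization to use is
\[ \Psi\colon L_k\times(0,\infty)\times\BS(E^{\perp})\longrightarrow\BHd\setminus L_k,\qquad \Psi(y,\rho,w)=\cosh(\rho)\,y+\sinh(\rho)\,w, \]
with $\BS(E^{\perp})$ the Euclidean unit sphere of $E^{\perp}$. One checks that $\Psi$ is a diffeomorphism onto $\BHd\setminus L_k$, that, using $\langle y,y'\rangle\le-1$ with equality iff $y=y'$ for $y,y'\in\BHd$, one has $d_h(\Psi(y,\rho,w),L_k)=\rho$ with unique foot point $y$, and that for fixed $y$ the map $w\mapsto\Psi(y,\rho,w)$, $\rho>0$, is precisely the geodesic-polar parametrization of $L_k^{\perp}(y)=\BHd\cap(\mathrm{span}(y)\oplus E^{\perp})\cong\BH^{d-k}$ around $y$. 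The key point is that $\Psi^{*}g$ is block-diagonal: the derivatives of $\Psi$ in the $L_k$-, $\rho$- and $\BS(E^{\perp})$-directions have pairwise $\langle\cdot,\cdot\rangle$-products that vanish across blocks (because $E\perp E^{\perp}$, because $\langle y,y\rangle$ and $\langle w,w\rangle$ are constant, and because $\langle y,w\rangle=0$), while the diagonal blocks evaluate, via $\langle y,y\rangle=-1$ and $\langle w,w\rangle=1$, to
\[ \Psi^{*}g=\cosh^{2}(\rho)\,g_{L_k}\,\oplus\,\dint\rho^{2}\,\oplus\,\sinh^{2}(\rho)\,g_{\BS^{d-k-1}}; \]
hence $\cH^d$ pulls back to $\cosh^{k}(\rho)\sinh^{d-k-1}(\rho)\,\cH^{k}(\dint y)\,\dint\rho\,\cH^{d-k-1}(\dint w)$. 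Integrating $f$ and performing the $y$-integration last: for fixed $y$, part (b) applied inside $L_k^{\perp}(y)\cong\BH^{d-k}$ identifies $\sinh^{d-k-1}(\rho)\,\dint\rho\,\cH^{d-k-1}(\dint w)$ with the volume element $\cH^{d-k}(\dint z)$ of $L_k^{\perp}(y)$, and, since $\rho=d_h(z,L_k)$ for $z=\Psi(y,\rho,w)$, the remaining factor $\cosh^{k}(\rho)$ turns into $\cosh^{k}(d_h(z,L_k))$; this is exactly (a).

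I expect the main obstacle to be the Jacobian computation establishing that $\Psi^{*}g$ splits as a $\cosh^{2}$-warped product along $L_k$ together with a $\sinh^{2}$-warped product along the normal spheres — the only place where curvature $-1$ really enters, everything else being bookkeeping with $\langle\cdot,\cdot\rangle$. A secondary technical point is to justify the change of variables at the level of Hausdorff measures, i.e.\ that $\Psi$ (and the polar map in (b)) is a bi-Lipschitz bijection away from a null set so that the area formula on rectifiable sets applies, together with the degenerate cases $d=1$ in (b) and $k=d-1$ in (a), where a normal "sphere" is a two-point set.
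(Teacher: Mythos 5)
Your proposal is correct, but it takes a genuinely different route from the paper. The paper proves part (a) by passing to the Beltrami--Klein projective model: there a $k$-plane is an affine slice of the unit ball, so ordinary Fubini in $\R^d$ applies, and the hyperbolic volume elements are recovered by repeatedly invoking the density formula $(1-\|x\|^2)^{-(d+1)/2}$ from \cite[Lemma 5.1]{BHT} together with the explicit Klein-model expression $\cosh^2 d_h(z,y)=(1-z\bullet y)^2/((1-\|z\|^2)(1-\|y\|^2))$; part (b) is simply cited from \cite{Chavel1993}. You instead work in the hyperboloid model and exhibit Fermi coordinates $\Psi(y,\rho,w)=\cosh(\rho)y+\sinh(\rho)w$ directly, verifying by the Minkowski inner product that the pullback metric splits as $\cosh^2(\rho)\,g_{L_k}\oplus\dint\rho^2\oplus\sinh^2(\rho)\,g_{\mathbb{S}^{d-k-1}}$, which yields the Jacobian $\cosh^k(\rho)\sinh^{d-k-1}(\rho)$ at once; your block-orthogonality and distance computations ($d_h(\Psi(y,\rho,w),L_k)=\rho$ with foot point $y$, via $\langle y,y'\rangle\le-1$) are correct. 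What the paper's approach buys is that it outsources all differential-geometric work to cited lemmas and needs only measure-theoretic Fubini; the price is bookkeeping with powers of $1-\|x\|^2$ and a separate reference for (b). What your approach buys is a self-contained proof of both parts at once (part (b) being the case ``$k=0$'' of the same computation) and a transparent explanation of where the factors $\cosh^k$ and $\sinh^{d-k-1}$ come from; the price is the technical overhead you correctly identify, namely the identification of Hausdorff measure with Riemannian volume and the justification of the change of variables via the area formula on the full-measure set $\BHd\setminus L_k$, plus the degenerate cases $d=1$ and $k=d-1$. Both arguments are sound.
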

\begin{proof}
To derive part (a) we use the method established in Section 5 of \cite{BHT} and translate the problem into a problem in Euclidean space. For that purpose we work with the Beltrami--Klein (projective) model of $\mathbb{H}^d$, which is given by the open $d$-dimensional Euclidean unit ball $\mathsf{B}^d$. Applying \cite[Lemma 5.1]{BHT} with $k=d$ there gives
\begin{align*}
    \int_{\BH^d}f(x)\,\cH^d(\dint x) = \int_{\mathsf{B}^d}f(x)(1-\|x\|^2)^{-\frac{d+1}{ 2}}\,\cH_{\mathbb{R}^d}^d(\dint x),
\end{align*}
where $\|\,\cdot\,\|$ stands for the Euclidean norm and $\cH_{\mathbb{R}^d}^d$ for the $d$-dimensional Hausdorff measure in $\mathbb{R}^d$. Since we are working with the projective model of $\BH^d$, a $k$-dimensional totally geodesic subspace $L_k\subseteq\BH^d$ is the intersection $\widetilde{L}_k\cap\mathsf{B}^d$ of a $k$-dimensional affine subspace $\widetilde{L}_k\subseteq \R^d$ with $\mathsf{B}^d$. Since each $k$-dimensional totally geodesic subspace of $\BH^d$ can be mapped to any other by an isometry of $\BH^d$, we may assume without loss of generality that $\widetilde{L}_k$ passes through the center of $\mathsf{B}^d$. Then, using Fubini's theorem and denoting by $\widetilde{L}_k^\perp$ the $(d-k)$-dimensional Euclidean orthogonal complement of $\widetilde{L}_k$, it follows that
\begin{align*}
    \int_{\mathsf{B}^d}f(x)(1-\|x\|^2)^{-\frac{d+1}{2}}\,\cH_{\mathbb{R}^d}^d(\dint x) = \int_{\widetilde{L}_k\cap\mathsf{B}^d}\int_{(\widetilde{L}_k^\perp+y)\cap\mathsf{B}^d} f(z)(1-\|z\|^2)^{-\frac{d+1}{ 2}}\,\cH_{\mathbb{R}^d}^{d-k}(\dint z)\,\cH_{\mathbb{R}^d}^k(\dint y).
\end{align*}
To the inner integral we now apply \cite[Lemma 5.1]{BHT} again to see that the last expression is the same as
\begin{align*}
    &\int_{\widetilde{L}_k\cap\mathsf{B}^d}\int_{L_k^\perp(y)}f(z)(1-\|z\|^2)^{-\frac{d+1}{2}}(1-\|z\|^2)^{\frac{d-k+1}{2}}(1-\|y\|^2)^{-\frac{1}{2}}\,\cH^{d-k}(\dint z)\,\cH_{\mathbb{R}^d}^k(\dint y)\\
    &\quad=\int_{\widetilde{L}_k\cap\mathsf{B}^d}\int_{L_k^\perp(y)}f(z)(1-\|z\|^2)^{-\frac{k}{2}}(1-\|y\|^2)^{-\frac{1}{2}}\,\cH^{d-k}(\dint z)\,\cH_{\mathbb{R}^d}^k(\dint y),
\end{align*}
where we also used the fact that $\widetilde{L}_k+y$ has Euclidean distance $\|y\|$ from the center of $\mathsf{B}^d$; see also the argument in the second paragraph of the proof of \cite[Lemma 5.3]{BHT} concerning orthogonality of subspaces in the Beltrami--Klein model. Writing $\bullet$ for the Euclidean scalar product in $\R^d$ and using \cite[Theorem 6.1.1]{Ratcliffe}, we obtain
$$
\cosh^2 d_h(z,L_k)=\cosh^2 d_h(z,y)=\frac{(1-z\bullet y)^2}{(1-\|z\|^2)(1-\|y\|^2)}=\frac{1-\|y\|^2}{1-\|z\|^2}.
$$
Plugging this into the last integral expression and applying once more \cite[Lemma 5.1]{BHT} to the outer integral, we arrive at
\begin{align*}
    \int_{\BH^d}f(x)\,\cH^d(\dint x) 
    &=\int_{\widetilde{L}_k\cap\mathsf{B}^d}\int_{L_k^\perp(y)}f(z)\,\cosh^k(d_h(z,L_k))\,\cH^{d-k}(\dint z)\,(1-\|y\|^2)^{-\frac{k+1}{2}}\,\cH_{\mathbb{R}^d}^k(\dint y)\\
    &=\int_{L_k}\int_{L_k^\perp(y)}f(z)\,\cosh^k(d_h(z,L_k))\,\cH^{d-k}(\dint z)\,\cH^k(\dint y),
\end{align*}
which concludes the proof of part (a). Part (b) is a consequence of Equations (3.16) and (3.22) in \cite{Chavel1993}.
\end{proof}

\begin{remark}
{\rm The special case $k=1$ of Lemma \ref{lem:IntegrationFormulas} can be found as Lemma 2.1 in \cite{RosenSchulteThaeleTrapp}. It was derived there using different methods such as the coarea formula.}
\end{remark}

We also need the following identity.

\begin{lemma}\label{prop:ell_identity}
    If \(0 \leq j < k \le d-1\), then
    \begin{equation}
    \label{eq:ell_identity}
        \int_0^r \ell_{d,k}\left(\arcosh\left(\frac{\cosh(r) }{\cosh(s)}\right)\right) \ell_{k,j}'(s)\, \dint s = \ell_{d,j}(r),\qquad r\geq 0.
    \end{equation}
\end{lemma}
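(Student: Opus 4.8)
The plan is to prove the identity by reducing it to the linear independence of the coefficient functions $l_{d,0},\dots,l_{d,d-1}$ noted after the definition of $l_{d,j}$, or alternatively by a direct double-integral computation via the substitution $\cosh(u)=\cosh(r)/\cosh(s)$. I will describe the direct route, since it is the most self-contained. First I would unfold the definitions: write $\ell_{d,k}(\arcosh(\cosh r/\cosh s))=\omega_{d-k}\int_0^{u(s)}\cosh^k(t)\sinh^{d-1-k}(t)\,\dint t$ where $u(s)\defeq\arcosh(\cosh r/\cosh s)$, and $\ell_{k,j}'(s)=\omega_{k-j}\cosh^j(s)\sinh^{k-1-j}(s)$. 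The left-hand side of \eqref{eq:ell_identity} then becomes a double integral over the region $\{(s,t): 0\le s\le r,\ 0\le t\le u(s)\}$ of the integrand $\omega_{d-k}\omega_{k-j}\cosh^k(t)\sinh^{d-1-k}(t)\cosh^j(s)\sinh^{k-1-j}(s)$.

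Next I would change the order of integration. The constraint $t\le u(s)$ is equivalent to $\cosh(t)\le\cosh r/\cosh s$, i.e.\ $\cosh(s)\le\cosh r/\cosh t$, i.e.\ $s\le v(t)\defeq\arcosh(\cosh r/\cosh t)$; and the range of $t$ becomes $0\le t\le r$. So the double integral equals
\[
\omega_{d-k}\omega_{k-j}\int_0^r\cosh^k(t)\sinh^{d-1-k}(t)\left(\int_0^{v(t)}\cosh^j(s)\sinh^{k-1-j}(s)\,\dint s\right)\dint t.
\]
The inner integral is exactly $\omega_{k-j}^{-1}\ell_{k,j}(\arcosh(\cosh r/\cosh t))$ by the same substitution structure, so after this step the problem is self-referential in a way that suggests instead performing the inner integral directly. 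The cleanest device is to substitute in the inner integral $\cosh(s)=\cosh(t)\cosh(w)$, equivalently $w=\arcosh(\cosh s/\cosh t)$; when $s$ ranges over $[0,v(t)]$, $w$ ranges over $[0,u(t)]$ where $u(t)=\arcosh(\cosh r/\cosh t)$ — wait, more carefully $\cosh(v(t))=\cosh r/\cosh t$ so $w$ runs to $\arcosh(\cosh r/\cosh^2 t)$... I would instead use the substitution that linearizes everything: set $x=\cosh(t)$, $y=\cosh(s)$, turning each $\sinh$ into $\sqrt{x^2-1}$ etc., and then pass to a single well-chosen coordinate.

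The cleanest and least error-prone execution, which I would actually adopt, is the linear-independence argument: fix $j$ and regard both sides of \eqref{eq:ell_identity} as functions of $r$. By the substitution above the left-hand side is a continuous function of $r$ with value $0$ at $r=0$. Differentiating in $r$ (using that $\ell_{d,k}(0)=0$ kills the boundary term from the upper limit, and $\ell_{k,j}'$ is continuous) reduces the claimed identity to an identity among the integrand derivatives; iterating, or directly recognizing the resulting expression, one matches $\ell_{d,j}'(r)=\omega_{d-j}\cosh^j(r)\sinh^{d-1-j}(r)$. Concretely, I expect after differentiation and the change of variables the left-hand side derivative to collapse to $\omega_{d-j}\cosh^j(r)\sinh^{d-1-j}(r)$ via the Beta-function identity $\int_0^{\pi/2}\cos^a\theta\sin^b\theta\,\dint\theta$ type evaluation hidden in the $\omega$ constants (recall $\omega_{d-k}\omega_{k-j}$ vs $\omega_{d-j}$ relate through $\Gamma$-function ratios), together with the geometric substitution $\cosh r=\cosh s\cosh(\text{geodesic leg})$ which is precisely the hyperbolic Pythagorean theorem for the right triangle implicit in $L_k^\perp(y)$.

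\textbf{Main obstacle.} The genuinely delicate point is getting the constants right: the factors $\omega_{d-k},\omega_{k-j},\omega_{d-j}$ must combine correctly, and this reduction ultimately rests on a Beta-integral evaluation of the form $\int_0^{\pi/2}\sin^{2a-1}\theta\cos^{2b-1}\theta\,\dint\theta=\tfrac12 B(a,b)=\tfrac{\Gamma(a)\Gamma(b)}{2\Gamma(a+b)}$ together with the identity $\sinh^{d-1-j}(t)=\sinh^{k-1-j}(t)\cdot$ (a factor involving the orthogonal leg) after the hyperbolic-Pythagoras substitution. I would isolate this as the key computational lemma, verify it once carefully for the constants, and then the rest of the argument is the routine Fubini swap and change of variables sketched above. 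An appealing alternative that sidesteps the constants almost entirely is to invoke the iterated Steiner formula: applying \eqref{eq:Steinerhyp} in $\BH^d$ to a convex body contained in an $L_k$ and using Proposition~\ref{prop:intrinsic_hyp} to relate its $\BH^d$- and $\BH^k$-intrinsic volumes, then re-expanding $\mathcal H^d(\BB(A,r))$ by first thickening inside $L_k$ and then orthogonally (using Lemma~\ref{lem:IntegrationFormulas}(a) for the $\cosh^k$ Jacobian), yields \eqref{eq:ell_identity} after comparing coefficients of linearly independent functions — but this requires care that the decomposition of the parallel body is valid, so I would present the direct analytic proof as the primary argument.
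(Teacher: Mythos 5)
Your primary strategy --- both sides of \eqref{eq:ell_identity} vanish at $r=0$, so differentiate in $r$, observe that the boundary term from the upper limit dies because $\ell_{d,k}(0)=0$, and match the result with $\ell_{d,j}'(r)=\omega_{d-j}\cosh^j(r)\sinh^{d-1-j}(r)$ --- is exactly the route the paper takes. But as written the proposal stops at precisely the point you yourself flag as the main obstacle: the evaluation of
\[
\partial_r\int_0^r \ell_{d,k}\!\left(\arcosh\!\left(\tfrac{\cosh r}{\cosh s}\right)\right)\ell_{k,j}'(s)\,\dint s
=\omega_{d-k}\omega_{k-j}\cosh^k(r)\sinh(r)\int_0^r\frac{\sinh^{k-1-j}(s)}{\cosh^{d-1-j}(s)}\big(\sinh^2(r)-\sinh^2(s)\big)^{\frac{d-2-k}{2}}\,\dint s
\]
is only asserted, not carried out. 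The identity that actually makes the constants work is not the Wallis-type integral $\int_0^{\pi/2}\sin^{2a-1}\theta\cos^{2b-1}\theta\,\dint\theta=\tfrac12 B(a,b)$ you anticipate, but (after the substitutions $\sinh(s)=x\sinh(r)$ and $t=x^2$) the less standard
\[
\int_0^1\frac{t^{a-1}(1-t)^{b-1}}{(1+ct)^{a+b}}\,\dint t=(1+c)^{-a}B(a,b),\qquad c=\sinh^2(r),\ a=\tfrac{k-j}{2},\ b=\tfrac{d-k}{2},
\]
whose factor $(1+c)^{-a}=\cosh^{-(k-j)}(r)$ is exactly what converts the prefactor $\cosh^k(r)$ into the required $\cosh^j(r)$; the $\Gamma$-function bookkeeping $\omega_{d-k}\omega_{k-j}\cdot\tfrac12 B(\tfrac{k-j}{2},\tfrac{d-k}{2})=\omega_{d-j}$ alone does not produce the right power of $\cosh(r)$. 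A second, smaller omission: for $k=d-1$ the integrand $\partial_r f(r,s)$ blows up like $(\sinh^2(r)-\sinh^2(s))^{-1/2}$ as $s\uparrow r$, so differentiating under the integral sign requires an explicit integrable dominating function independent of the increment, which the paper supplies. Neither issue is fatal --- the skeleton is correct and matches the paper's --- but both must be filled in before this is a proof; your alternative routes (the Fubini swap, which you correctly note becomes circular, and the iterated Steiner formula, which risks circularity with Proposition \ref{prop:intrinsic_hyp} since that proposition is proved \emph{using} this lemma) do not substitute for the missing computation.
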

\begin{proof} 
    We start by collecting some auxiliary facts.
    The beta function, which is  defined by \(B(a,b) := \int_0^1 t^{a-1} (1-t)^{b-1}\,\dint t\)
    for \(a,b > 0\), can be expressed in terms of the gamma function as \(B(a,b) = \Gamma(a)\Gamma(b)/\Gamma(a+b)\), hence 
    \begin{equation}
    \label{eq:rel_beta_omega}
        \frac{1}{2}B\left(\frac{j}{2},\frac{k}{2}\right) = \frac{\omega_{j+k}}{\omega_j \omega_k},
    \end{equation}
    for \(j,k \in \N\). We will also make use of the integral identity
    \begin{equation}
    \label{eq:beta_identity}
        \int_0^1 \frac{t^{a-1} (1-t)^{b-1}}{(1+ct)^{a+b}} \,\dint t = (1+c)^{-a} B(a,b),
    \end{equation}
    which holds for \(a,b>0\) and \(c>-1\).
    This can be seen by rewriting the integral as
    \[ \int_0^1 (1+c)^{-a}  \left(\frac{(1+c)t}{1+ct}\right)^{a-1} \left(\frac{1-t}{1+ct}\right)^{b-1} \frac{1+c}{(1+ct)^2}\,\dint t, \]
    and substituting \(y = (1+c)t/(1+ct)\).

    We now turn to the proof of the statement of the lemma. For this, let $f(r,s)$ denote the integrand on the left-hand side of  \eqref{eq:ell_identity}, where $(r,s)\in [0,\infty)^2$ with $s\le r$, and let  $F(r):=\int_0^rf(r,s)\, \dint s$ for $r\ge 0$. 
    Both sides of \eqref{eq:ell_identity} are zero for \(r=0\). We show that they are  continuous as functions of $r\ge 0$ and that  their right derivatives agree, which yields the assertion of the lemma. We first consider $F$. Let $r\ge 0$ be fixed. If $h\in (0,1]$, then
    $$
    F(r+h)-F(r)=\int_0^r f(r+h,s)-f(r,s)\,\dint s + \int_r^{r+h} f(r+h,s)\, \dint s.
    $$
    Since for any given $\varepsilon>0$, we have $|f(r+h,s)|\le \varepsilon$ if $s\in [r,r+h]$ and $h>0$ is small enough (recall that $r\ge 0$ is fixed), it follows that 
    $$
    \lim_{h\downarrow 0}\frac{1}{h}\int_r^{r+h} f(r+h,s)\, \dint s=0.
    $$
    Moreover, there are $\theta=\theta_{r,s,h}\in(0,1)$ such that
    $$
    \int_0^r \frac{f(r+h,s)-f(r,s)}{h}\,\dint s
    =\int_0^r\partial _rf(r+\theta h,s)\, \dint s,
    $$
    where we write $\partial_rf$ for the partial derivative of $f$ with respect to the first argument and 
    $$
    \partial_rf(t,s)=\omega_{d-k} \omega_{k-j} \cosh^k(t) \sinh(t)\frac{\sinh^{k-1-j}(s)}{\cosh^{d-1-j}(s)} (\sinh^2(t) - \sinh^2(s))^{\frac{d-2-k}{2}},\quad 0\le s<t.
    $$
   Recall that $0<h\le 1$.  If $k\le d-2$, then  $|\partial _rf(r+\theta h,s)|$ is bounded from above as a function of $s\in (0,r)$ by a constant that depends only on $r$ and is independent of $h$. If $k=d-1$, then
   $$
    |\partial_rf(r+\theta h,s)| \leq \omega_{d-k} \omega_{k-j} \cosh^k(r+1) \sinh^{k-j}(r+1)\left(\sinh^2(r) - \sinh^2(s)\right)^{-\frac{1}{2}},\quad 0<s<r.
   $$
   The subsequent calculation shows that as a function of $s\in (0,r)$, the right-hand side is integrable (and independent of $h$). Hence, we can interchange the order of the limit $h\downarrow 0$ and the integration to get, for the right derivative $\partial_r^+F(r)$ of $F$ at $r$, 
\begin{align*}
    \partial_r^+F(r)&=\int_0^r \lim_{h\downarrow 0}\partial _rf(r+\theta h,s)\,\dint s\\
        &= \omega_{d-k} \omega_{k-j} \cosh^k(r) \sinh(r)
            \int_0^r \frac{\sinh^{k-1-j}(s)}{\cosh^{d-1-j}(s)} (\sinh^2(r) - \sinh^2(s))^\frac{d-2-k}{2}\, \dint s\\
        &= \omega_{d-k} \omega_{k-j} \cosh^k(r) \sinh^2(r)
            \int_0^1 \frac{(x \sinh(r))^{k-1-j}}{(1 + x^2 \sinh^2(r))^{\frac{d-j}{2}}}
            (\sinh^2(r) - x^2 \sinh^2(r))^{\frac{d-2-k}{2}} \,\dint x\\
        &= \omega_{d-k} \omega_{k-j} \cosh^k(r) \sinh^{d-1-j}(r)
            \int_0^1 \frac{x^{k-1-j}}{(1 + x^2 \sinh^2(r))^\frac{d-j}{2}} (1-x^2)^\frac{d-2-k}{2} \,\dint x\\
        &= \omega_{d-k} \omega_{k-j} \cosh^k(r) \sinh^{d-1-j}(r) \frac{1}{2}
            \int_0^1 \frac{t^{\frac{k-j}{2} - 1} (1-t)^{\frac{d-k}{2}-1}}{(1 + t \sinh^2(r))^\frac{d-j}{2}} \,\dint t\allowdisplaybreaks\\
        &= \omega_{d-k} \omega_{k-j} \cosh^k(r) \sinh^{d-1-j}(r)
            \frac{1}{(1+\sinh^2(r))^\frac{k-j}{2}} \frac{1}{2} B\left(\frac{k-j}{2}, \frac{d-k}{2}\right)\allowdisplaybreaks\\
        &\quad= \omega_{d-j} \cosh^j(r) \sinh^{d-1-j}(r),
    \end{align*}
    where we used the substitution \(\sinh(s) = x \sinh(r)\) in the third line,
    the substitution \(t = x^2\) in the fifth  line, \eqref{eq:beta_identity} in the sixth  line, 
    and \eqref{eq:rel_beta_omega} in the last line.
    We also used the identity $\cosh^2(u) - \sinh^2(u) = 1$ for $u\in\mathbb{R}$ repeatedly. This shows in particular that $F$ is continuous from the right. Continuity from the left is seen from the decomposition 
$$
F(r+h)-F(r)=\int_0^{r+h} f(r+h,s)-f(r,s)\,\dint s-\int_{r+h}^r f(r,s)\, \dint s
$$
for $h<0$ and $r+h\ge 0$.

    Differentiating the right-hand side gives
    \[ \ell_{d,j}'(r) = \omega_{d-j} \cosh^j(r) \sinh^{d-1-j}(r), \]
    as well. This concludes the proof.
\end{proof}

We can now turn to the proof of the main statement of this section.

\begin{proof}
[Proof of \Cref{prop:intrinsic_hyp}]
Let \(1 \leq k \leq d-1\).
We fix some nonempty \(A \in \cK^k\).
For \(x \in \BHd\), let \(\pi_{L_k}(x)\) be the orthogonal projection of \(x\) onto \(L_k\).
By the Pythagorean theorem of hyperbolic geometry (compare, e.g.\ \cite[Theorem 3.5.3]{Ratcliffe}), it holds that
\begin{equation}
\label{eq:dist_pythagoras}
    \cosh(d_h(x, \iota(A))) = \cosh(d(x, \pi_{L_k}(x))) \cdot \cosh(\pi_{L_k}(x), \iota(A)). 
\end{equation}
Since \(\pi_{L_k}(z) = y\) for all \(z \in L_k^\perp(y)\), it follows from  \eqref{eq:dist_pythagoras}, Lemma \ref{lem:IntegrationFormulas} (a) and  Lemma \ref{lem:IntegrationFormulas} (b) (with \(L_k^\perp(y) \cong \BH^{d-k}\)) that, for \(r \geq 0\),
\begin{align*}
    &\cH^d(\BB(\iota(A),r))\\
    &\quad= \int_{\BHd} \one\{ d_h(x, \iota(A)) \leq r\} \,\cH^d(\dint x)\\
    &\quad= \int_{L_k} \int_{L_k^\perp(y)} \cosh^k(d_h(y,z)) \one\{ \cosh(d_h(z, y)) \cdot \cosh(d_h(y, \iota(A))) \leq \cosh(r) \}\, \cH^{d-k}(\dint z)\, \cH^k(\dint y)\\
    &\quad= \int_{L_k} \omega_{d-k} \int_0^\infty \sinh^{d-k-1}(t) \cosh^k(t) \one\{ \cosh(t) \cdot \cosh(d_h(y, \iota(A))) \leq \cosh(r) \} \,\dint t \,\cH^k(\dint y)\\
    &\quad= \int_{\BH^k} \omega_{d-k} \int_0^{\infty} \sinh^{d-k-1}(t) \cosh^k(t) \one\{ \cosh(t) \cdot \cosh(d_h(y, A)) \leq \cosh(r) \} \, \dint t \,\cH^k(\dint y),
\end{align*}
where we used in the last step the fact that \(\iota \colon \BH^k \to \BH^d\) is an isometry with $\iota(\BH^k)=L_k$.
We write the integrand as \(f( d_h(y,A))\), where
\[ 
f(s) \defeq \omega_{d-k} \int_0^{\infty} \sinh^{d-k-1}(t) \cosh^k(t) \one\{ \cosh(t) \cdot \cosh(s) \leq \cosh(r) \} \, \dint t, \qquad s \geq 0.  
\]
Note that \(f(s) = \ell_{d,k}(\arcosh\left({\cosh(r)}/{\cosh(s)}\right))\) for \(s \leq r\) and \(f(s)=0\) otherwise. Consequently,
\begin{align}\label{eq:dh10}
    \cH^d(\BB(\iota(A),r))
    &= \int_A f(0) \,\cH^k(\dint y) + \int_{\BB(A,r)\setminus A} f(d_h(y,A)) \,\cH^k(\dint y).
\end{align}
The first integral is equal to
\begin{equation}\label{eq:dh11} 
\int_A f(0)\, \cH^k(\dint y) = \cH^k(A) \ell_{d,k}(r) = V^{\BH^k}_k(A) \ell_{d,k}(r). 
\end{equation}
To evaluate the second integral, define \(g(s) \defeq \cH^k(\BB(A,s))\), \(s \geq 0\), where $\BB(A,s)$ is the parallel set of $A$ in $\mathbb{H}^k$.
By the Steiner formula, \(g\) is continuously differentiable with derivative given by
\begin{equation}\label{eq:dh1a} g'(s) = \sum_{j=0}^{k-1} \ell_{k,j}'(s)V^{\BH^k}_j(A). \end{equation}
We claim that
\begin{equation}\label{eq:dh1b} 
\int_{\BB(A,r)\setminus A} f(d_h(y,A)) \,\cH^k(\dint y) = \int_0^r f(s) g'(s)\, \dint s. 
\end{equation}
To see this, take an arbitrary partition \(s_0=0,\ldots,s_n=r\) of \([0,r]\) and pick \(\xi_i \in (s_{i-1},s_i)\), so that \(g(s_i)-g(s_{i-1})=g'(\xi_i)(s_i - s_{i-1})\), \(i=1,\ldots,n\).
We can now write
\begin{align*}
    &\int_{\BB(A,r)\setminus A} f(d_h(y,A))\, \cH^k(\dint y)\\
    &\quad=\sum_{i=1}^n \int_{\BB(A,s_{i})\setminus \BB(A,s_{i-1})} f(d_h(y,A))\, \cH^k(\dint y)\\
    &\quad= \sum_{i=1}^n f(\xi_i) (g(s_i) - g(s_{i-1})) + \sum_{i=1}^n \int_{\BB(A,s_{i})\setminus \BB(A,s_{i-1})} f(d_h(y,A))-f(\xi_i) \,\cH^k(\dint y)\\
    &\quad= \sum_{i=1}^n f(\xi_i) g'(\xi_i) (s_i - s_{i-1}) + \sum_{i=1}^n \int_{\BB(A,s_{i})\setminus \BB(A,s_{i-1})} f(d_h(y,A))-f(\xi_i) \,\cH^k(\dint y).
\end{align*}
As the mesh \(\max_{1\leq i\leq n} |s_i-s_{i-1}|\) of the partition goes to zero, the first sum converges to \(\int_0^r f(s) g'(s)\, \dint s\) and the second sum converges to zero, since \(f\) is uniformly continuous on \([0,r]\). This proves \eqref{eq:dh1b}. 
Plugging \eqref{eq:dh1a} into \(\int_0^r f(s) g'(s) \,\dint s\), we obtain
\begin{equation}\label{eq:dh1c} 
 \int_0^r f(s) g'(s)\, \dint s=\sum_{j=0}^{k-1} V^{\BH^k}_j(A) \cdot \int_0^r f(s) \ell_{k,j}'(s)\, \dint s 
    = \sum_{j=0}^{k-1} V^{\BH^k}_j(A) \cdot \ell_{d,j}(r), \end{equation}
where we used Lemma \ref{prop:ell_identity}.
Combining \eqref{eq:dh10}, \eqref{eq:dh11}, \eqref{eq:dh1b} and \eqref{eq:dh1c}, we obtain
\[ \cH^d(\BB(\iota(A),r)) = \sum_{j=0}^{k} V^{\BH^k}_j(A) \cdot \ell_{d,j}(r). \]
On the other hand, applying the Steiner formula directly yields
\[ \cH^d(\BB(\iota(A),r)) = \cH^d(\iota(A)) + \sum_{j=0}^{d-1} V^{\BH^d}_j(\iota(A)) \cdot \ell_{d,j}(r). \]
The claim now follows from the preceding two identities, since the functions \(\ell_{d,0},\ldots,\ell_{d,d-1}\) are linearly independent.
\end{proof}

\section{Stationary Poisson particle processes in hyperbolic space}\label{sec:ParticleProcesses}

 Let $\eta$ be a stationary Poisson particle process on $\cC^d$. In our set-up stationarity refers to the invariance of the distribution of $\eta$ under isometries of $\mathbb{H}^d$. We refer to \cite{HLS2} for an introduction to stationary Poisson particle processes in hyperbolic space and to stationary Boolean models derived from them. This means that the intensity measure $\Lambda:=\E\eta$ of $\eta$ is locally finite, diffuse by \cite[Lemma 2.4]{HLS2}, and hence $\eta$ is a simple particle process. 

Let $c:\cC^d\to\BH^d$ be an isometry covariant center function, where the condition of isometry covariance means that $c(\varrho A)=\varrho c(A)$ for all $A\in\cC^d$ and $\varrho\in\Ih$.  By \cite[Lemma 2.4 and Theorem 2.3]{HLS2}, the intensity 
$$
\gamma := \E\sum_{K\in\eta}\I\{c(K)\in B\},\qquad B\subset\BH^d\text{ measurable with }\cH^d(B)=1,
$$
is independent of the center function and finite. In addition, we suppose that $\gamma$ is also positive. Moreover, it is shown in \cite[Theorem 2.1]{HLS2} that there exists a unique   probability measure
$\BQ$ on $\cC^d$ which is concentrated
on $\cC_\sfp^{d}=\{A\in\cC^d:c(A)=\sfp\}$, invariant under isometries fixing $\sfp$, and satisfies
\begin{equation}\label{eqdisintegration}
\Lambda(\,\cdot\,) =\gamma \int_{\cC^{d}}\int_{\Ih} \I\{\varrho K\in\,\cdot\,\} \,\lambda(\dint \varrho) \,\BQ(\dint K).
   \end{equation}
The probability measure $\BQ$ is called the distribution of the typical grain of $\eta$. 

We complement the introduction of stationary Poisson particle processes and Boolean models in \cite[Section 2]{HLS2} by explaining how the marking of a stationary Poisson point process in $\BH^d$ can be used for an alternative approach. Let $\xi$ be a stationary Poisson point process in $\BH^d$ with intensity measure $\gamma\cdot \mathcal{H}^d$, where $\gamma>0$. We adopt the notation from \cite[Section 2]{HLS2} and consider for each $x\in\BH^d$ the set $\Ih(\sfp,x):=\{\varrho \in \Ih:\varrho\sfp=x\}$ and the probability kernel $\kappa$ from $\BH^d$ to $\Ih$ introduced in \cite[Equation (2.6)]{HLS2}; see also Equation \eqref{eq:KappaDef} below.
For $x\in \BH^d$, the probability measure $\kappa(x,\,\cdot\,)$ on $\Ih$ is concentrated on $\Ih(\sfp,x)$, and the kernel $\kappa$ disintegrates the Haar measure $\lambda$ on $\Ih$ so that
\begin{align}\label{disintlambda}
\int_{\mathbb{H}^d}\int_{\Ih} \I\{\varrho\in\,\cdot\,\}\,\kappa(x,\dint \varrho)\,\mathcal{H}^d(\dint x)=\lambda(\,\cdot\,).
\end{align}  
We now define a kernel $\alpha$ from $\BH^d$ to $\cC^d$ (see \cite[p.~40]{LP18}) by  
$$
\alpha(x,\,\cdot\,):=\int_{\cC^d}\int_{\Ih}\I\{\varrho K\in\,\cdot\,\}\, \kappa(x,\dint \varrho)\, \BQ(\dint K),\quad x\in\BH^d.
$$
Since $\BQ$ is concentrated on  $\cC_\sfp^{d}$ and the center function is isometry covariant, the probability measure $\alpha(x,\cdot)$ is concentrated on $\cC_{x}^{d} = \{A\in\cC^d:c(A)=x\}$. 
Let $\Psi$ be an $\alpha$-marking of $\xi$, as defined in \cite[Definition 5.3]{LP18}.  Then $\Psi$ is a (marked) Poisson process on $\BH^d\times\cC^d$ with intensity measure
$$
(\E\Psi)(\,\cdot\,)=\gamma\int_{\BH^d}\int_{\cC^d}\int_{\Ih}\I\{(x,\varrho K)\in\,\cdot\,\}\,
\kappa(x,\dint\varrho)\, \BQ(\dint K)\, \mathcal{H}^d(\dint x);
$$
see the marking theorem \cite[Theorem 5.6]{LP18}. The mapping lemma \cite[Theorem 5.1]{LP18} for Poisson processes shows that $\widetilde{\eta} = \Psi(\BH^d\times\,\cdot\,)$ is a Poisson process on $\cC^d$ with intensity measure
\begin{align*}
(\E\widetilde{\eta})(\,\cdot\,)&=\gamma\int_{\cC^d}\int_{\BH^d}\int_{\Ih}\I\{ \varrho K\in\,\cdot\,\}\,
\kappa(x,\dint\varrho)\,\mathcal{H}^d(\dint x)\, \BQ(\dint K)\\
&=\gamma\int_{\cC^d}\int_{\Ih}\I\{ \varrho K\in\,\cdot\,\}\, \lambda(\dint \varrho)\, \BQ(\dint K)=\Lambda(\,\cdot\,),   
\end{align*}
where \eqref{disintlambda} was used in the last step. Thus, $\eta\stackrel{d}{=}\widetilde{\eta}$ can be obtained by (position dependent) marking with the kernel $\alpha$ from a stationary Poisson point process in $\BH^d$.

There is yet another representation of a stationary Poisson particle process, which avoids the kernel $\kappa$. To present it, for each $x\in\BH^d$ we select an isometry $\varrho_x \in\Ih(\sfp, x)$ in such a way that the mapping $x\mapsto\varrho_x$ is measurable. Such an assignment exists as explained on page 121 of \cite{Ratcliffe}. Then, the measure \(\kappa(x,\,\cdot\,)\) can be represented as
\begin{equation}\label{eq:KappaDef}
\kappa(x,\,\cdot\,) = \int_{\Ih(\sfp,\sfp)} \I\{\varrho_x  \varrho \in \,\cdot\,\} \,\kappa(\sfp,\dint \varrho).
\end{equation}
Moreover, the kernel \(\alpha\) can then be rewritten as
    \begin{align*}
        \alpha(x,\,\cdot\,)
            &= \int_{\cC^d}\int_{\Ih}\I\{\varrho K\in\,\cdot\,\}\, \kappa(x,\dint \varrho)\, \BQ(\dint K)\\
            &= \int_{\cC^d}\int_{\Ih(\sfp,\sfp)}\I\{\varrho_x \varrho K\in\,\cdot\,\}\, \kappa(\sfp,\dint \varrho)\, \BQ(\dint K)\\
            &= \int_{\Ih(\sfp,\sfp)} \int_{\cC^d} \I\{\varrho_x \varrho K\in\,\cdot\,\}\, \BQ(\dint K) \, \kappa(\sfp,\dint \varrho)\\
            &= \int_{\cC^d} \I\{\varrho_x K\in\,\cdot\,\}\, \BQ(\dint K),
    \end{align*}
    where the last inequality follows since \(\BQ\) is invariant under isometries fixing \(\sfp\) and \(\kappa(\sfp,\,\cdot\,)\) is a probability measure.
    Similarly, the representation of the intensity measure \(\E \Psi\) simplifies to
    $$
    (\E\Psi)(\,\cdot\,)=\gamma\int_{\BH^d}\int_{\cC^d}\I\{(x,\varrho_x K)\in\,\cdot\,\}\,\BQ(\dint K)\, \mathcal{H}^d(\dint x)
    $$
and therefore
$$
(\E\eta)(\,\cdot\,)=\gamma\int_{\BH^d}\int_{\cC^d}\I\{\varrho_x K\in\,\cdot\,\}\,\BQ(\dint K)\, \mathcal{H}^d(\dint x).
$$

\section{Intersection density of a hyperbolic Poisson particle process}

We adopt the set-up of the previous section and let $\eta$ be a stationary Poisson particle process on $\cC^d$ with positive intensity and intensity measure given by \eqref{eqdisintegration}. We assume that the probability measure $\BQ$, the distribution of the typical grain of $\eta$, is concentrated on the subset of $\cC^d_\sfp$ which only consists of sets whose boundaries are $(d-1)$-rectifiable. We recall that a subset $G\subset\BH^d$ is called $m$-rectifiable, for some $m\in\{0,1,\ldots,d\}$, if $G$ is the Lipschitz image of some bounded subset of $\mathbb{R}^m$. In particular, each compact $m$-dimensional $C^1$-submanifold of $\BH^d$ is $m$-rectifiable. That this class of sets is a measurable subset of $\cC^d$ follows from the results of \cite[Section 2]{Zaehle82} as explained in the remark on page 184 of \cite{Zaehle83}.

We write $s_d(\eta,B)$ for the number of intersection points of the boundaries of any $d$ distinct particles of $\eta$ within a bounded measurable set $B\subset\BHd$, that is,
$$
s_d(\eta,B):=\frac{1}{d!}\sum_{(K_1,\ldots,K_d)\in\eta^d_{\neq}}\mathcal{H}^0(B\cap \partial K_1\cap\ldots\cap\partial K_d),
$$
where $\eta^d_{\neq}$ denotes the $d$-th factorial measure of $\eta$ (see \cite[Section 4.2]{LP18}). In Theorem \ref{thm:ID} we show that $s_d(\eta,B)<\infty$ holds almost surely. 
Since $\eta$ is stationary, $\E[s_d(\eta,\,\cdot\,)]$ defines an isometry invariant measure on Borel sets in $\BHd$. If this measure is locally finite, it is a multiple of the $d$-dimensional Hausdorff measure. The following theorem provides more precise information and shows that the multiplicity can be expressed in terms of the mean $(d-1)$-dimensional Hausdorff measure of the boundary of the typical grain   and the intensity $\gamma$ of $\eta$. Recall that $\kappa_d=\pi^{d/2}/\Gamma(1+d/2)$, $d\in\N_0$, and $\omega_d=d\kappa_d$ for $d\in\N$.

\begin{theorem}\label{thm:ID}
Let $\eta$ be a stationary Poisson particle process in $\cC^d$ whose particles almost surely have $(d-1)$-rectifiable boundaries, with positive intensity $\gamma$  and distribution $\BQ$ of the typical grain.  If $B\subset\BHd$ is a bounded Borel set, then $s_d(\eta,B)<\infty$ holds  almost surely and 
    $$
    \E[s_d(\eta,B)]=\kappa_d \left(\frac{\kappa_{d-1}}{d\kappa_d}\cdot\gamma \int_{\cC^d_{\sfp}}\mathcal{H}^{d-1}(\partial G)\, \BQ(\dint G)\right)^d\mathcal{H}^d(B).
    $$
\end{theorem}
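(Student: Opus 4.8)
The plan is to compute $\E[s_d(\eta,B)]$ via the multivariate Mecke equation for the Poisson process $\eta$, then translate the resulting geometric integral over $d$-tuples of particles into an intersection-of-hypersurfaces integral that can be evaluated using the disintegration \eqref{eqdisintegration} together with a translative/kinematic-type integral formula in $\BH^d$. Concretely, I would first apply the $d$-fold Mecke formula to
$$
\E[s_d(\eta,B)]=\frac{1}{d!}\,\E\!\!\sum_{(K_1,\ldots,K_d)\in\eta^d_{\neq}}\mathcal{H}^0(B\cap\partial K_1\cap\cdots\cap\partial K_d),
$$
obtaining $\frac{1}{d!}\int\cdots\int \mathcal{H}^0(B\cap\partial K_1\cap\cdots\cap\partial K_d)\,\Lambda(\dint K_1)\cdots\Lambda(\dint K_d)$, and then insert \eqref{eqdisintegration} for each copy of $\Lambda$. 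This replaces each $K_i$ by $\varrho_i G_i$ with $G_i\in\cC^d_\sfp$ distributed as $\BQ$ and $\varrho_i$ ranging over $\Ih$ against $\lambda$. After using the normalization \eqref{eq:normalize} to write each $\lambda(\dint\varrho_i)$ as a Haar integral over isometries, the expression becomes an integral of $\mathcal{H}^0\big(B\cap\bigcap_{i=1}^d \partial(\varrho_i G_i)\big)$ over $(\varrho_1,\ldots,\varrho_d)\in\Ih^d$ against $\lambda^{\otimes d}$, weighted by $\BQ^{\otimes d}(\dint(G_1,\ldots,G_d))$.

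The core geometric input is then an iterated hyperbolic integral-geometric identity: for fixed $(d-1)$-rectifiable hypersurfaces $M_1,\ldots,M_d\subset\BH^d$,
$$
\int_{\Ih}\cdots\int_{\Ih}\mathcal{H}^0\!\Big(B\cap \varrho_1 M_1\cap\cdots\cap\varrho_d M_d\Big)\,\lambda(\dint\varrho_1)\cdots\lambda(\dint\varrho_d)
= c_d\,\mathcal{H}^d(B)\prod_{i=1}^d \mathcal{H}^{d-1}(M_i),
$$
for an explicit constant $c_d$. Here the factorization over the $M_i$ reflects that, after fixing $B$ and integrating over $\varrho_1$, one obtains (by the hyperbolic Crofton/coarea formula for rectifiable sets, in the spirit of the area-coarea machinery and the integration formulas of Lemma \ref{lem:IntegrationFormulas}) a term proportional to $\mathcal{H}^{d-1}(M_1)$ times an integral of $\mathcal{H}^{d-1}$ of the $(d-1)$-dimensional intersection $B\cap\varrho_1 M_1$ against the remaining hypersurfaces, and one iterates, the dimension dropping by one at each stage until a $\mathcal{H}^0$-count remains. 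The constant $c_d$ is the product of the elementary normalizing constants appearing at each step; tracking the Jacobian factors $\cosh^k,\sinh^{d-1-k}$ from Lemma \ref{lem:IntegrationFormulas} and the sphere-volume normalizations, one expects $c_d=\kappa_d\big(\kappa_{d-1}/(d\kappa_d)\big)^d$ after the $1/d!$ cancels with the $d!$ orderings of the tuple. Feeding this back, the $\BQ^{\otimes d}$-integral factorizes into $\big(\int_{\cC^d_\sfp}\mathcal{H}^{d-1}(\partial G)\,\BQ(\dint G)\big)^d$, and collecting all constants yields the claimed formula; the almost sure finiteness of $s_d(\eta,B)$ follows because its expectation is finite (the mean boundary area being finite by the standing assumption and \cite[Theorem 2.3]{HLS2}).

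The main obstacle I anticipate is establishing the iterated factorization identity rigorously at the level of merely $(d-1)$-rectifiable sets rather than smooth submanifolds: one must justify the repeated application of the coarea formula to the (random) intersections $B\cap\varrho_1 M_1\cap\cdots$, verify that these intersections are $\mathcal{H}^{d-j}$-rectifiable for $\lambda$-a.e.\ choice of isometries (a transversality statement, provable by a Fubini argument showing non-transversal configurations form a null set), and control the measurability and integrability throughout so that Fubini's theorem and the Mecke formula apply. Getting the constant $c_d$ exactly right is a matter of careful but routine bookkeeping once the reduction is set up; I would calibrate it on a test case — e.g.\ taking all grains to be balls of a fixed radius, where $\mathcal{H}^{d-1}(\partial G)$ and the intersection densities can be computed directly — to confirm the normalization.
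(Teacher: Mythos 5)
Your overall strategy coincides with the paper's: the multivariate Mecke formula, insertion of the disintegration \eqref{eqdisintegration}, and an iterated hyperbolic Poincar\'e/kinematic identity of exactly the form you state, with the same constant $c_d=d!\kappa_d\bigl(\kappa_{d-1}/(d\kappa_d)\bigr)^d$ cancelling against the $1/d!$. The paper obtains that identity not by re-deriving it from coarea arguments but by quoting Brothers' general Poincar\'e formula in homogeneous spaces (including its transversality/rectifiability statement for $\lambda$-a.e.\ isometry) and then inducting on the number of hypersurfaces; this settles the rectifiability and measurability issues you correctly flag as the main obstacle, so that part of your plan is sound and matches the intended route.

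There is, however, one genuine gap: your justification of the almost sure finiteness of $s_d(\eta,B)$. You argue that it follows because $\E[s_d(\eta,B)]<\infty$, ``the mean boundary area being finite by the standing assumption.'' No such assumption is in force: the theorem only requires the particles to have $(d-1)$-rectifiable boundaries, and $\int_{\cC^d_\sfp}\mathcal{H}^{d-1}(\partial G)\,\BQ(\dint G)$ may well be infinite for non-convex grains (the paper's remark after the theorem makes clear that finiteness of the intersection density is only guaranteed under convexity, via the Steiner formula and the local finiteness of $\Lambda$). When that integral is infinite the displayed expectation is $+\infty$ and gives no control on $s_d(\eta,B)$. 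The correct argument, as in the paper, is pointwise: for $\BQ^d$-a.e.\ fixed tuple $(G_1,\ldots,G_d)$ each $\partial G_i$ is $(d-1)$-rectifiable and hence has \emph{finite} $\mathcal{H}^{d-1}$-measure individually, so the right-hand side of the iterated identity is finite and therefore $\mathcal{H}^0(B\cap\varrho_1\partial G_1\cap\cdots\cap\varrho_d\partial G_d)<\infty$ for $\lambda^d$-a.e.\ $(\varrho_1,\ldots,\varrho_d)$; by the Mecke formula the expected number of tuples in $\eta^d_{\neq}$ with an infinite intersection count is zero, and since $\eta$ is locally finite only finitely many tuples meet $B$ at all. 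You should replace your finiteness argument by this one.
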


The factor in front of $\mathcal{H}^d(B)$ is called the \textit{intersection density} $\gamma_d(\eta)$ of $\eta$, that is,
$$
\gamma_d(\eta)=\kappa_d \left(\frac{\kappa_{d-1}}{\omega_d}\cdot\gamma\int_{\cC^d_{\sfp}}\mathcal{H}^{d-1}(\partial G)\, \BQ(\dint G)\right)^d\in [0,\infty].
$$
If $\eta$ has almost surely convex grains, then $\gamma_d(\eta)<\infty$, since $\Lambda$ is locally finite. In fact, this follows from \cite[Theorem 2.3]{HLS2} and the Steiner formula. 

\begin{remark}
{\rm Theorem \ref{thm:ID} holds in all space forms (with proper definitions of the required concepts). The Euclidean case is analyzed in \cite[Section 4.6]{SW08}, especially on pages 153--154, also for anisotropic particle processes, but the analysis requires the restriction to convex grains.
}    
\end{remark}

Our proof of Theorem \ref{thm:ID} is based on a general version of Poincar\'e's formula in hyperbolic space, which is provided in \cite{Brothers66}; see Theorem 5.15 in combination with Definition 2.13 and Remark 8.16. For smooth compact submanifolds, versions and extensions of this result are contained in \cite{Howard}; see Theorem 3.8 and Corollary 3.9. We will need the following special case, which we state as a lemma. If $k\in\{0,d\}$ or $l\in\{0,d\}$, then the assertion of the lemma follows from Fubini's theorem and \eqref{eq:normalize}. 

\begin{lemma}\label{Le:Brothers}
If $A\subset\BHd$ is a Hausdorff $k$-rectifiable Borel set and $C\subset\BHd$ is an $l$-rectifiable Borel set with $k,l\in\{0,\ldots,d\}$ and $k+l\ge d$, then
$$
\int_{\Ih}\mathcal{H}^{k+l-d}(A\cap \varrho C)\, \lambda(\dint\varrho)=\gamma(d,k,l)\mathcal{H}^k(A)\mathcal{H}^l(C),
$$
where
$$
\gamma(d,k,l)=\frac{k!\kappa_k l!\kappa_l}{d!\kappa_d(k+l-d)!\kappa_{k+l-d}}.
$$
Moreover, for $\lambda$-almost all $\varrho\in \Ih$ the intersection $A\cap\varrho C$ is a Hausdorff $(k+l-d)$-rectifiable Borel set.
\end{lemma}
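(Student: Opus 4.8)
The plan is to split the argument according to whether the configuration is degenerate, i.e.\ $k\in\{0,d\}$ or $l\in\{0,d\}$, or genuinely non-degenerate, i.e.\ $1\le k,l\le d-1$. Because the hypothesis $k+l\ge d$ forces $l=d$ whenever $k=0$ and $k=d$ whenever $l=0$, every degenerate configuration reduces to $k=d$ or $l=d$. Moreover the asserted identity is symmetric under interchanging $(A,k)$ with $(C,l)$: substituting $\varrho\mapsto\varrho^{-1}$, using the unimodularity of $\Ih$ and the isometry invariance of Hausdorff measure turns $\int_{\Ih}\mathcal{H}^{k+l-d}(A\cap\varrho C)\,\lambda(\dint\varrho)$ into $\int_{\Ih}\mathcal{H}^{k+l-d}(C\cap\varrho A)\,\lambda(\dint\varrho)$, and $\gamma(d,k,l)$ is symmetric in $k,l$. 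Hence it suffices to treat $l=d$, where $\gamma(d,k,d)=1$. I would write $\mathcal{H}^k(A\cap\varrho C)=\int_A\one\{x\in\varrho C\}\,\mathcal{H}^k(\dint x)$, interchange integrations by Tonelli's theorem, substitute $\varrho\mapsto\varrho^{-1}$, and invoke \eqref{eq:normalize} to get $\int_{\Ih}\one\{\varrho x\in C\}\,\lambda(\dint\varrho)=\mathcal{H}^d(C)$ for every $x$; the remaining integral equals $\mathcal{H}^k(A)\mathcal{H}^d(C)$. In this case the rectifiability of $A\cap\varrho C$ is immediate, since it is a Borel subset of the Hausdorff $k$-rectifiable set $A$.

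For $1\le k,l\le d-1$ with $k+l\ge d$, the identity is precisely the Poincaré (kinematic) formula of Brothers for Hausdorff-rectifiable sets in the homogeneous space $\BHd$ under the transitive, proper action of $\Ih$ equipped with its invariant measure. I would verify the hypotheses of \cite[Theorem 5.15]{Brothers66} ($A$ Hausdorff $k$-rectifiable, $C$ $l$-rectifiable, the isometry group acting transitively with isotropy $O(d)$) and then quote the theorem. It delivers at once the integrability and an identity $\int_{\Ih}\mathcal{H}^{k+l-d}(A\cap\varrho C)\,\lambda(\dint\varrho)=c(d,k,l)\,\mathcal{H}^k(A)\mathcal{H}^l(C)$ with a constant $c(d,k,l)$ that depends only on $d,k,l$ and is described abstractly through \cite[Definition 2.13 and Remark 8.16]{Brothers66}, together with the assertion that $A\cap\varrho C$ is Hausdorff $(k+l-d)$-rectifiable for $\lambda$-almost every $\varrho$. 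This settles the structural content of the lemma, and the only remaining task is to identify $c(d,k,l)$ with $\gamma(d,k,l)$.

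Determining the constant is the real work. I would argue that $c(d,k,l)$ is a universal integral-geometric constant, independent of $A$, $C$, and of the curvature. In Brothers' derivation it arises by disintegrating $\lambda$ into a ``translation'' part, namely the orbit map $\varrho\mapsto\varrho\sfp$, which \eqref{eq:normalize} normalizes exactly so as to reproduce $\mathcal{H}^d$, and an isotropy part, namely the stabilizer of $\sfp$, which is $O(d)$ acting by its standard representation on $T_\sfp\BHd\cong\R^d$; the pointwise weight is the $(k+l-d)$-dimensional Jacobian of the intersection of a fixed $k$-dimensional with a rotated $l$-dimensional linear subspace of $\R^d$. Since this is the same datum that enters the Euclidean Poincaré formula, $c(d,k,l)$ coincides with the classical Euclidean kinematic constant under the matching normalization in which the rotation group carries total mass one, which is precisely what \eqref{eq:normalize} enforces. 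Evaluating this Grassmannian average, or quoting the Euclidean Poincaré formula of Federer and rewriting the resulting ratio of Gamma functions via $m!\kappa_m=m!\,\pi^{m/2}/\Gamma(1+m/2)$, then yields $c(d,k,l)=\frac{k!\kappa_k\,l!\kappa_l}{d!\kappa_d\,(k+l-d)!\kappa_{k+l-d}}=\gamma(d,k,l)$.

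The main obstacle is exactly this bookkeeping: pinning down Brothers' normalization of the invariant measure against the one fixed by \eqref{eq:normalize}, and reducing his constant, given through densities on the relevant homogeneous and Grassmannian spaces, to the clean $\kappa$-form above. A useful consistency check I would run is the boundary value $\gamma(d,k,d)=1$, which must agree with the elementary computation of the first paragraph, and the symmetry $\gamma(d,k,l)=\gamma(d,l,k)$; both serve as guardrails while matching normalizations. The measurability issues (joint measurability of $(\varrho,x)\mapsto\one\{x\in\varrho C\}$ and of $\varrho\mapsto\mathcal{H}^{k+l-d}(A\cap\varrho C)$) are routine and I would dispatch them quickly using the standard structure of Hausdorff measures on rectifiable sets.
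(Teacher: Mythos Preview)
Your proposal is correct and follows essentially the same route as the paper: the degenerate cases $k\in\{0,d\}$ or $l\in\{0,d\}$ are dispatched via Fubini and the normalization \eqref{eq:normalize}, and the non-degenerate case is obtained by quoting Brothers' Poincar\'e formula \cite[Theorem~5.15]{Brothers66} together with \cite[Definition~2.13 and Remark~8.16]{Brothers66} for the constant. The only minor difference is that the paper pins the almost-sure rectifiability of $A\cap\varrho C$ more specifically to \cite[Theorem~4.4\,(4)\,(iii)]{Brothers66} (and the transversality statement in \cite[Theorem~3.8]{Howard}) rather than reading it off Theorem~5.15 directly, and it does not spell out the constant matching as you do; but the substance is the same.
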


\begin{proof}
The second statement follows from \cite[Theorem 4.4 (4) (iii)]{Brothers66} (see also the statement about transversal intersections in \cite[Theorem 3.8]{Howard}) and the proof of Theorem 5.15 in \cite{Brothers66}. 
\end{proof}

We need the following iterated version.

\begin{lemma}\label{Le:Brothers2}
Let $i\in\{1,\ldots,d\}$. If $B\subset\BHd$ is a bounded Borel set and  $C_1,\ldots,C_i\subset\BHd$ are $(d-1)$-rectifiable Borel sets, then 
\begin{align}\label{eq:integralgeom}
&\int_{(\Ih)^i}\mathcal{H}^{d-i}(B\cap \varrho_1 C_1\cap \ldots\cap \varrho_i C_i)\, \lambda^i(\dint(\varrho_1,\ldots,\varrho_i))\nonumber\\
&\quad 
=\frac{d!\kappa_d}{(d-i)!\kappa_{d-i}}\left(\frac{\kappa_{d-1}}{d\kappa_d}\right)^i\mathcal{H}^d(B)\mathcal{H}^{d-1}(C_1)\cdots \mathcal{H}^{d-1}(C_i).
\end{align}
Moreover, for $\lambda^i$-almost all $(\varrho_1,\ldots,\varrho_i)\in (\Ih)^i$ the intersection $B\cap \varrho_1 C_1\cap \ldots\cap \varrho_i C_i$ is a Hausdorff $(d-i)$-rectifiable Borel set. 
\end{lemma}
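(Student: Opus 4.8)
The plan is to prove Lemma~\ref{Le:Brothers2} by induction on $i$, using Lemma~\ref{Le:Brothers} repeatedly together with Fubini's theorem. For the base case $i=1$, the identity \eqref{eq:integralgeom} is precisely Lemma~\ref{Le:Brothers} applied with $A=B$ (which is $d$-rectifiable since it is bounded, hence trivially Hausdorff $d$-rectifiable as $\mathcal{H}^d(B)<\infty$), $C=C_1$, $k=d$ and $l=d-1$: one checks that $\gamma(d,d,d-1)=\frac{d!\kappa_d(d-1)!\kappa_{d-1}}{d!\kappa_d(d-1)!\kappa_{d-1}}\cdot\frac{(d-1)!\kappa_{d-1}}{(d-1)!\kappa_{d-1}}$, and more carefully that $\gamma(d,d,d-1)=\frac{\kappa_{d-1}}{\kappa_d}\cdot\frac{1}{d}\cdot\frac{d!\kappa_d}{(d-1)!\kappa_{d-1}}=\frac{d!\kappa_d}{(d-1)!\kappa_{d-1}}\left(\frac{\kappa_{d-1}}{d\kappa_d}\right)$, matching the claimed constant. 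The rectifiability part of the base case is the last sentence of Lemma~\ref{Le:Brothers}.

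For the inductive step, suppose the statement holds for some $i-1\in\{1,\ldots,d-1\}$. Fix $\varrho_1,\ldots,\varrho_{i-1}$ outside the $\lambda^{i-1}$-null set provided by the induction hypothesis, so that $B':=B\cap\varrho_1C_1\cap\cdots\cap\varrho_{i-1}C_{i-1}$ is a Hausdorff $(d-i+1)$-rectifiable Borel set. Apply Lemma~\ref{Le:Brothers} with $A=B'$, $C=C_i$, $k=d-i+1$, $l=d-1$ (note $k+l=2d-i\ge d$ since $i\le d$), which gives
\begin{align*}
\int_{\Ih}\mathcal{H}^{d-i}(B'\cap\varrho_iC_i)\,\lambda(\dint\varrho_i)=\gamma(d,d-i+1,d-1)\,\mathcal{H}^{d-i+1}(B')\,\mathcal{H}^{d-1}(C_i),
\end{align*}
and simultaneously that $B'\cap\varrho_iC_i$ is Hausdorff $(d-i)$-rectifiable for $\lambda$-almost all $\varrho_i$. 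One then integrates this identity over $(\varrho_1,\ldots,\varrho_{i-1})$ against $\lambda^{i-1}$, invokes Tonelli's theorem (all integrands are nonnegative and measurable, the latter following from the measurability statements in Lemma~\ref{Le:Brothers} and a monotone-class/standard-measurability argument for the rectifiable intersection), and substitutes the induction hypothesis for $\int_{(\Ih)^{i-1}}\mathcal{H}^{d-i+1}(B')\,\lambda^{i-1}(\dint(\varrho_1,\ldots,\varrho_{i-1}))$. It remains to verify the arithmetic identity
\begin{align*}
\gamma(d,d-i+1,d-1)\cdot\frac{d!\kappa_d}{(d-i+1)!\kappa_{d-i+1}}\left(\frac{\kappa_{d-1}}{d\kappa_d}\right)^{i-1}=\frac{d!\kappa_d}{(d-i)!\kappa_{d-i}}\left(\frac{\kappa_{d-1}}{d\kappa_d}\right)^{i},
\end{align*}
i.e.\ that $\gamma(d,d-i+1,d-1)=\frac{(d-i+1)!\kappa_{d-i+1}}{(d-i)!\kappa_{d-i}}\cdot\frac{\kappa_{d-1}}{d\kappa_d}$, which follows by writing out $\gamma(d,d-i+1,d-1)=\frac{(d-i+1)!\kappa_{d-i+1}\,(d-1)!\kappa_{d-1}}{d!\kappa_d\,(d-i)!\kappa_{d-i}}$ directly from the formula in Lemma~\ref{Le:Brothers}.

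The measurability and null-set bookkeeping is the only delicate point: one must ensure that the exceptional $\lambda^{i-1}$-null set of $(\varrho_1,\ldots,\varrho_{i-1})$ for which $B'$ fails to be Hausdorff $(d-i+1)$-rectifiable does not interfere, and that the map $(\varrho_1,\ldots,\varrho_i)\mapsto\mathcal{H}^{d-i}(B\cap\varrho_1C_1\cap\cdots\cap\varrho_iC_i)$ is jointly measurable so that Tonelli applies; both follow from iterating the measurability assertions of Lemma~\ref{Le:Brothers} (equivalently \cite[Theorem 4.4]{Brothers66}), since on the complement of the null set the inner integral is a genuine function of the remaining variables and off it we may set the integrand to zero without affecting the value. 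I do not expect any real obstacle here beyond careful application of Fubini--Tonelli and the constant chase; the geometric content is entirely carried by Lemma~\ref{Le:Brothers}.
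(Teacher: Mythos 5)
Your proposal is correct and follows essentially the same route as the paper: induction on $i$, with the base case being the $(k,l)=(d,d-1)$ instance of Lemma~\ref{Le:Brothers} (where $\gamma(d,d,d-1)=1$), and the inductive step applying Lemma~\ref{Le:Brothers} to the Hausdorff $(d-i+1)$-rectifiable set $B\cap\varrho_1C_1\cap\cdots\cap\varrho_{i-1}C_{i-1}$ and the $(d-1)$-rectifiable set $C_i$, followed by Fubini--Tonelli and the constant identity you verify. The paper is terser on the measurability bookkeeping, but your constant chase and null-set handling match its argument.
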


\begin{proof} Note that $B$ is $d$-rectifiable. 
    We proceed by induction over $i\in\{1,\ldots,d\}$. For $i=1$ the assertion is the special case $(k,l)=(d,d-1)$ with $\gamma(d,d,d-1)=1$ of  Lemma \ref{Le:Brothers}. Assume that the assertion has been proved for $i\in \{1,\ldots,d-1\}$. Then $B\cap \varrho_1 C_1\cap \ldots\cap \varrho_{i-1} C_{i-1}$ is a Hausdorff $(d-i+1)$-rectifiable Borel set for $\lambda^{i-1}$-almost all $(\varrho_1,\ldots,\varrho_{i-1})\in (\Ih)^{i-1}$. Since $C_i$ is a $(d-1)$-rectifiable Borel set, the rectifiability statement for $B\cap \varrho_1 C_1\cap \ldots\cap \varrho_i C_i$ follows from the rectifiability statement of Lemma \ref{Le:Brothers} and Fubini's theorem. 
    
    To verify the integral geometric formula,  we  apply for $\lambda^{i-1}$-almost all $(\varrho_1,\ldots,\varrho_{i-1})\in (\Ih)^{i-1}$  Lemma \ref{Le:Brothers}  with the Hausdorff $(d-i+1)$-rectifiable Borel set $B\cap \varrho_1 C_1\cap \ldots\cap \varrho_{i-1} C_{i-1}$ and the $(d-1)$-rectifiable Borel set $C_i$. By means of Fubini's theorem we obtain
    \begin{align*}
        &\int_{(\Ih)^i}\mathcal{H}^{d-i}(B\cap \varrho_1 C_1\cap \ldots\cap \varrho_i C_i)\, \lambda^i(\dint(\varrho_1,\ldots,\varrho_i))\\
        &\quad=\int_{(\Ih)^{i-1}}\int_{\Ih}\mathcal{H}^{d-i}([B\cap \varrho_1 C_1\cap \ldots \varrho_{i-1}C_{i-1}]\cap \varrho_i C_i)\,\lambda(\dint\varrho_i)\, \lambda^{i-1}(\dint(\varrho_1,\ldots,\varrho_{i-1}))\\
        &\quad=\gamma(d,d-i+1,d-1)\\
        &\qquad \times 
        \int_{(\Ih)^{i-1}}\mathcal{H}^{d-i+1}(B\cap \varrho_1 C_1\cap \ldots\cap \varrho_{i-1} C_{i-1})\, \lambda^{i-1}(\dint(\varrho_1,\ldots,\varrho_{i-1})) \, \mathcal{H}^{d-1}(C_i)\\
        &\quad=\gamma(d,d-i+1,d-1)\frac{d!\kappa_d}{(d-i+1)!\kappa_{d-i+1}}\left(\frac{\kappa_{d-1}}{d\kappa_d}\right)^{i-1}\,\mathcal{H}^d(B)\mathcal{H}^{d-1}(C_1)\cdots \mathcal{H}^{d-1}(C_i),
    \end{align*}
    where in the last step the induction hypothesis has been used. The assertion follows after simplification of the constants.    
\end{proof}

\begin{remark}{\rm 
    Relation \eqref{eq:integralgeom} remains true for an arbitrary Borel set $B\subseteq\BH^d$ and for sets $C_1,\ldots,C_i\subset\BH^d$ such that $C_i\cap W$ is a $(d-1)$-rectifiable Borel set whenever $W\subset\BH^d$ is a bounded Borel set. This follows from Lemma \ref{Le:Brothers2} via the monotone convergence theorem.}
\end{remark}

\begin{proof}[Proof of Theorem \ref{thm:ID}]
    Starting from the definition of $s_d(\eta,B)$, we use the multivariate Mecke formula, \cite[Theorem 4.4]{LP18} and finally Lemma \ref{Le:Brothers2}. Using also \eqref{eqdisintegration},  we obtain 
    \begin{align*}
&\E[s_d(\eta,B)]=\E\left[\frac{1}{d!}\sum_{(K_1,\ldots,K_d)\in\eta^d_{\neq}}\mathcal{H}^0(B\cap\partial K_1\cap\ldots\cap\partial K_d)\right] \\
&\quad=\frac{\gamma^d}{d!}\int_{(\cC^d_\sfp)^d}\int_{(\Ih)^d}
\mathcal{H}^0(B\cap \varrho_1\partial G_1\cap \ldots\cap \varrho_d\partial G_d)\, \lambda^d(\dint(\varrho_1,\ldots,\varrho_d))\, \BQ^d(\dint(G_1,\ldots,G_d))\\
&\quad=\frac{\gamma^d}{d!}\int_{(\cC^d_\sfp)^d}
d!\kappa_d\left(\frac{\kappa_{d-1}}{d\kappa_d}\right)^d\mathcal{H}^d(B)\mathcal{H}^{d-1}(\partial G_1)\cdots \mathcal{H}^{d-1}(\partial G_d)
\, \BQ^d(\dint(G_1,\ldots,G_d))\\
&\quad=\kappa_d\left(\frac{\kappa_{d-1}}{d\kappa_d}\right)^d
\mathcal{H}^d(B)\left(\gamma\int_{\cC^d_\sfp}\mathcal{H}^{d-1}(\partial G)\, \BQ(\dint G)\right)^d,
    \end{align*}
by another application of Fubini's theorem in the last step. 

In order to verify that $s_d(\eta,B)<\infty$ holds almost surely, we consider
 \begin{align}\label{eq:5infinity}
&\E\left[\sum_{(K_1,\ldots,K_d)\in\eta^d_{\neq}}\I\left\{\mathcal{H}^0(B\cap\partial K_1\cap\ldots\cap\partial K_d)=\infty\right\}\right] \\
&\quad=\gamma^d\int_{(\cC^d_\sfp)^d}\int_{(\Ih)^d}
\I\left\{\mathcal{H}^0(B\cap \varrho_1\partial G_1\cap \ldots\cap \varrho_d\partial G_d)=\infty\right\}\, \nonumber \\
&\hspace{6cm}  \lambda^d(\dint(\varrho_1,\ldots,\varrho_d))\,\BQ^d(\dint(G_1,\ldots,G_d)).\nonumber
\end{align}
For given $(d-1)$-rectifiable sets $\partial G_1,\ldots,\partial G_d$, Lemma \ref{Le:Brothers2} implies that
$$
\mathcal{H}^0(B\cap \varrho_1\partial G_1\cap \ldots\cap \varrho_d\partial G_d)<\infty\quad \text{for } \lambda^d\text{-a.a. } (\varrho_1,\ldots,\varrho_d)\in(\Ih)^d, 
$$
since the right-hand side of \eqref{eq:integralgeom} is finite in this situation. Hence we conclude that for $\BQ^d$-a.e. $(G_1,\ldots,G_d)\in (\cC^d_\sfp)^d$ the inner integral on the right-hand side of \eqref{eq:5infinity} is zero. This shows that almost surely
$$
\sum_{(K_1,\ldots,K_d)\in\eta^d_{\neq}}\I\left\{\mathcal{H}^0(B\cap\partial K_1\cap\ldots\cap\partial K_d)=\infty\right\}=0,
$$
i.e.\ almost surely $\mathcal{H}^0(B\cap\partial K_1\cap\ldots\cap\partial K_d)<\infty$ for $(K_1,\ldots,K_d)\in\eta^d_{\neq}$. Since $\eta$ is locally finite, almost surely we also have $B\cap\partial K_1\cap\ldots\cap\partial K_d\neq \varnothing$ for at most finitely many $(K_1,\ldots,K_d)\in\eta^d_{\neq}$. Thus we conclude that $s_d(\eta,B)<\infty$ holds almost surely.
\end{proof}

\section{Mean visible volume in a hyperbolic Boolean model}

In this section, we assume that $\eta$ is a stationary Poisson particle process on $\cC^d$, concentrated on $\cK^d$. The associated union set 
$$
Z:=\bigcup_{K\in\eta}K
$$
is called a stationary \textit{Boolean model}. As in the previous section, we write $\BQ$ for the distribution of the typical grain. For $x,y\in\BHd$,  let $[x,y]$ denote the geodesic segment connecting $x$ and $y$.  The visibility from $\sfp$ in direction $u\in \bS_{\sfp}$, the unit sphere in the tangent space of $\BHd$ at $\sfp$, within $\BHd\,\setminus Z$ is described by the random variable 
\begin{equation}\label{eq:VisRange}
s_{\sfp,u}(Z):=\sup\{s\ge 0:[\sfp,\exp_\sfp(su)]\cap Z=\varnothing\},    
\end{equation}
where $\exp_\sfp$ stands for the exponential map at $\sfp$, which maps a point $su$, with $s\geq 0$  and $u\in\bS_\sfp$, in the tangent space of $\BH^d$ at $\sfp$ to the point in $\BH^d$ at hyperbolic distance $s$ on the geodesic through $\sfp$ in direction $u$.
We call $s_{\sfp,u}(Z)$ the \textit{visibility range} from $\sfp$ in direction $u$. 
The \textit{mean visible volume} from $\sfp$ outside $Z$ is the quantity defined by the (elementary) conditional expectation value 
$$
\overline{\vol}_s(Z):=\E[\mathcal{H}^d(S_\sfp(Z))\mid p\notin Z],
$$
where the open and star-shaped set
$$
S_\sfp(Z):=\{y\in\BHd:[\sfp,y]\cap Z=\varnothing\},
$$
is the \textit{visible region} seen from $\sfp$. For an introduction to these quantities in Euclidean space (and the proof of basic measurability statements) we refer to \cite[Section 4.6, pp.~150--153]{SW08}. We aim at determining the mean visible volume $\overline{\vol}_s(Z)$ of a Boolean model in hyperbolic space. As a first step, we consider the conditional distribution of $s_{\sfp,u}(Z)$ given that $\sfp\notin Z$. For a set $A\subset\BH^d$, we denote -- as usual -- by $\mathcal{F}_A$ the collection of all closed subsets of $\BH^d$ having nonempty intersection with $A$. 
Using this notation, let $r\ge 0$ and observe that
\begin{align}\label{eq:chi1}
&\BP(s_{\sfp,u}(Z)\le r\mid \sfp\notin Z)
=1-\BP(\sfp\notin Z)^{-1}\cdot \BP\left(Z\cap [\sfp,\exp_\sfp(ru)]=\varnothing\right)\nonumber\\   
&\quad=1-\BP(\sfp\notin Z)^{-1}\cdot \BP\left(\eta\left(\mathcal{F}_{[\sfp,\exp_\sfp(ru)]}\right)=0\right)\nonumber\\
&\quad=1-\BP(\sfp\notin Z)^{-1}\cdot\exp\left(-\Lambda\left(\mathcal{F}_{[\sfp,\exp_\sfp(ru)]}\right)\right)\nonumber\\
&\quad=1-\BP(\sfp\notin Z)^{-1}\cdot\exp\left(-\gamma
\int_{\cK^d_\sfp}\int_{\Ih}\chi\left( [\sfp,\exp_\sfp(ru)]\cap \varrho G\right) \, \lambda(\dint\varrho)\, \BQ(\dint G)\right).
\end{align}
In order to determine the integral in the last expression, we apply the principal kinematic formula for the Euler characteristic $\chi$ in hyperbolic space. As in \cite[Section 3]{HLS2}, for $d\ge 2$ we use  the normalized functionals 
\begin{align}\label{eq:vk0}
V_k^0(A):= \frac{\omega_{d+1}}{\omega_{k+1}\omega_{d-k}}\,\binom{d-1}{k} V_k(A)=\frac{\omega_{d+1}}{\omega_{k+1}} V_k^{\mathbb{H}^d}(A),\quad k\in\{0,\ldots,d-1\},
\end{align}
and set $V_d^0(A):= V_d(A)=V_d^{\mathbb{H}^d}(A)$ for $A\in\cK^d$. Note that 
\begin{equation}\label{eq:basicrelations}
V^0_1(A)=V_1(A),\qquad V_{d-1}^0(A)=\frac{\omega_{d+1}}{2\omega_d}V_{d-1}(A)\qquad\text{and}\qquad V_{0}^0(A)=\frac{\omega_{d+1}}{2\omega_d}V_{0}(A).
\end{equation}
In particular, for $d=2$ we get $V^0_i=V_i$ for all possible choices of $i\in\{0,1,2\}$. In arbitrary dimension $d\ge 1$ and for $A\in\mathcal{R}^d$ (the set of all finite unions of compact convex sets), the relation between the Euler characteristic and the intrinsic volumes can be expressed  in the form
\begin{equation}\label{eq:euler}
\chi(A)=\frac{2}{\omega_{d+1}}\sum_{l= 0}^{\lfloor \frac{d}{2}\rfloor} (-1)^l\, V_{2l}^0(A);
\end{equation}
see \cite[Equation (3.11)]{HLS2}, \cite[Th\'eor\`eme 1.2]{BeBoe2003}, and the references provided there. (For $d=1$, this follows from $\chi=\frac{1}{2}V_0$ together with $V^0_0\defeq\frac{\pi}{2}V_0$.) 
In terms of the modified functionals $V_k^0$, the principal kinematic formulas turn into  
 \begin{align}\label{eq:kinematic}
\int V_k^0(A\cap \varrho B)\, \lambda(\dint\varrho)=\sum_{i+j=d+k}V_i^0(A)V_j^0(B),\qquad k\in\{0,\ldots,d\},
\end{align}
where the summation extends over all $i,j\in \{0,\ldots,d\}$ such that $i+j=d+k$; see \cite[Equation (3.11)]{HLS2}, \cite[Th\'eor\`eme 4.4]{BeBoe2003}, and the references provided there. In the following, we use that $V_j^0(A)=0$ if $j>\dim A$, which is implied by Proposition \ref{prop:intrinsic_hyp}. 

Next, we consider the inner integral in \eqref{eq:chi1}, first apply \eqref{eq:euler} and then \eqref{eq:kinematic}. Using also Proposition \ref{prop:intrinsic_hyp} and \eqref{eq:basicrelations}, we get
\begin{align}\label{eq:innerintegral}
    &\int_{\Ih}\chi\left( [\sfp,\exp_\sfp(ru)]\cap \varrho G\right) \, \lambda(\dint\varrho)\nonumber\\
    &\quad=
    \frac{2}{\omega_{d+1}}\sum_{l= 0}^{\lfloor \frac{d}{2}\rfloor} (-1)^l \int_{\Ih} V_{2l}^0([\sfp,\exp_\sfp(ru)]\cap \varrho G)\, \lambda(\dint\varrho)\nonumber\\
    &\quad=\frac{2}{\omega_{d+1}}\sum_{l= 0}^{\lfloor \frac{d}{2}\rfloor} (-1)^l\sum_{i+j=d+2l}V_i^0(G)V_j^0([\sfp,\exp_\sfp(ru)])\nonumber\\
    &\quad=\frac{2}{\omega_{d+1}}\sum_{l= 0}^{\lfloor \frac{d}{2}\rfloor} (-1)^l\left(V_{d+2l}^0(G)V_0^0([\sfp,\exp_\sfp(ru)])+V_{d+2l-1}^0(G)V_1^0([\sfp,\exp_\sfp(ru)])\right)\nonumber\\
    &\quad=\frac{2}{\omega_{d+1}}\left(
    V_{d}^0(G)V_0^0([\sfp,\exp_\sfp(ru)])+
    V_{d-1}^0(G)V_1^0([\sfp,\exp_\sfp(ru)])
    \right)\nonumber\\
    &\quad=\frac{2}{\omega_{d+1}}\left(V_{d}(G)\frac{\omega_{d+1}}{2\omega_d}V_0([\sfp,\exp_\sfp(ru)])+\frac{\omega_{d+1}}{2\omega_d}
    V_{d-1}(G)V_1([\sfp,\exp_\sfp(ru)])
    \right)\nonumber\\
    &\quad=\frac{1}{\omega_{d}}\left(V_{d}(G)V_0([\sfp,\exp_\sfp(ru)])+
    V_{d-1}(G)V_1([\sfp,\exp_\sfp(ru)])
    \right)\nonumber\\
    &\quad=\frac{1}{\omega_{d}}\left(\omega_dV_{d}(G) +
    \kappa_{d-1}rV_{d-1}(G)
    \right)\nonumber\\
    &\quad=V_{d}(G)+\frac{\kappa_{d-1}}{d\kappa_d}rV_{d-1}(G),
\end{align}
where we used that $[\sfp,\exp_\sfp(ru)]$ is a one-dimensional geodesic segment, so that  Remark \ref{rem:propintrinsic} implies $V_0([\sfp,\exp_\sfp(ru)])=\omega_d$ and 
$$
V_1([\sfp,\exp_\sfp(ru)])=\kappa_{d-1}\mathcal{H}^1([\sfp,\exp_\sfp(ru)])=\kappa_{d-1}r.
$$
Next, we plug \eqref{eq:innerintegral} back into \eqref{eq:chi1} to deduce that
\begin{align*} 
\BP(s_{\sfp,u}(Z)\le r\mid \sfp\notin Z)
&=1-\BP(\sfp\notin Z)^{-1}\exp\left(- \gamma 
\int_{\cK^d_\sfp}
 \left( V_{d}(G) +
    \frac{\kappa_{d-1}}{d\kappa_d}rV_{d-1}(G)
    \right)
\, \BQ(\dint G)\right).
\end{align*}
Setting $r=0$ in the preceding equation, we conclude that
$$
\BP(\sfp\notin Z)=\BP\left(Z\cap [\sfp,\exp_\sfp(0\cdot u)]=\varnothing\right)=\exp\left(-\gamma 
\int_{\cK^d_\sfp}
   V_{d}(G) 
\, \BQ(\dint G)\right),
$$
and hence we finally obtain
$$
\BP(s_{\sfp,u}(Z)\le r\mid \sfp\notin Z)=1-
\exp\left(-r \frac{\kappa_{d-1}}{d\kappa_d}\gamma
\int_{\cK^d_\sfp}
  V_{d-1}(G)
\, \BQ(\dint G)\right).
$$
Defining
\begin{equation}
\label{eq:def_v_d_minus_1}
v_{d-1} := \int_{\cK^d_\sfp} V_{d-1}(G)\, \BQ(\dint G) \in [0,\infty)
\qquad\text{and}\qquad
v_{d-1}^\ast:=\frac{\kappa_{d-1}}{d\kappa_d} v_{d-1},
\end{equation}
{we have thus} proved the following result.

\begin{proposition}\label{prop:exp}
   Consider a Boolean model with compact convex grains and positive intensity $\gamma$. The conditional distribution of the visible range $s_{\sfp,u}(Z)$, given $\sfp\notin Z$, is an exponential distribution with parameter $v_{d-1}^\ast\gamma$, independently of $u\in\bS_\sfp$, where \(v_{d-1}^*\) is given by \eqref{eq:def_v_d_minus_1}.
\end{proposition}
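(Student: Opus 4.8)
The plan is to compute the conditional distribution function $r\mapsto \BP(s_{\sfp,u}(Z)\le r\mid \sfp\notin Z)$ in closed form and to recognize it as the distribution function of an exponential law. The structural input is that, since $Z$ is closed, the event $\{s_{\sfp,u}(Z)>r\}$ equals $\{Z\cap[\sfp,\exp_\sfp(ru)]=\varnothing\}=\{\eta(\mathcal{F}_{[\sfp,\exp_\sfp(ru)]})=0\}$, which is moreover contained in $\{\sfp\notin Z\}$ because the segment contains $\sfp$. Hence the conditional probability factors as the void probability of $\eta$ over $\mathcal{F}_{[\sfp,\exp_\sfp(ru)]}$ divided by $\BP(\sfp\notin Z)$, and by the Poisson void-probability formula the numerator equals $\exp(-\Lambda(\mathcal{F}_{C_r}))$ with $C_r:=[\sfp,\exp_\sfp(ru)]$. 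The problem thus reduces to evaluating $\Lambda(\mathcal{F}_{C_r})$.

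For this I would insert the disintegration \eqref{eqdisintegration} of $\Lambda$ and use that for two compact convex sets the indicator of a nonempty intersection equals the Euler characteristic of the intersection, so that $\Lambda(\mathcal{F}_{C_r})=\gamma\int_{\cK^d_\sfp}\int_{\Ih}\chi(C_r\cap\varrho G)\,\lambda(\dint\varrho)\,\BQ(\dint G)$. The inner $\lambda$-integral is handled by expressing $\chi$ through the normalized functionals $V_{2l}^0$ via \eqref{eq:euler}, applying the principal kinematic formula \eqref{eq:kinematic} term by term, and discarding every summand containing a functional $V_j^0(C_r)$ with $j\ge 2$: these vanish because $C_r$ is one-dimensional, by Proposition \ref{prop:intrinsic_hyp}. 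Only the contributions of $V_0^0(C_r)$ and $V_1^0(C_r)$ survive, and by Remark \ref{rem:propintrinsic} these equal $\omega_d$ and $\kappa_{d-1}\mathcal{H}^1(C_r)=\kappa_{d-1}r$; collecting the constants from \eqref{eq:basicrelations} collapses the alternating sum to $V_d(G)+\frac{\kappa_{d-1}}{d\kappa_d}\,rV_{d-1}(G)$.

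Plugging this back gives $\BP(s_{\sfp,u}(Z)\le r\mid \sfp\notin Z)=1-\BP(\sfp\notin Z)^{-1}\exp(-\gamma\int_{\cK^d_\sfp}(V_d(G)+\frac{\kappa_{d-1}}{d\kappa_d}rV_{d-1}(G))\,\BQ(\dint G))$. Evaluating at $r=0$ identifies $\BP(\sfp\notin Z)=\exp(-\gamma\int V_d(G)\,\BQ(\dint G))$, which is strictly positive because local finiteness of $\Lambda$ (via the Steiner formula and \cite[Theorem 2.3]{HLS2}) forces both $\int V_d\,\dint\BQ$ and $v_{d-1}=\int V_{d-1}\,\dint\BQ$ to be finite, so the conditioning is legitimate and the normalizing factor cancels the $V_d$-integral. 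What remains is exactly $1-\exp(-r\,v_{d-1}^\ast\gamma)$, the distribution function of the exponential law with parameter $v_{d-1}^\ast\gamma$; independence of $u$ is automatic, since $u$ enters only through the length $r$ of $C_r$ and all functionals used depend on $C_r$ only through that length.

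The step I expect to be the main obstacle is the evaluation of the inner kinematic integral: one must keep track of the three normalizations of the intrinsic volumes ($V_j$, $V_j^{\BHd}$, $V_j^0$), invoke the kinematic formula \eqref{eq:kinematic} in the $V_j^0$-normalization with its Euclidean-looking constants, justify the dimensional vanishing $V_j^0(C_r)=0$ for $j\ge 2$, and use the precise values of $V_0$ and $V_1$ on a geodesic segment from Remark \ref{rem:propintrinsic}. The remaining steps are routine bookkeeping with void probabilities and the disintegration of $\Lambda$.
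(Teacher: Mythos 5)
Your proposal is correct and follows essentially the same route as the paper: the void-probability computation \eqref{eq:chi1}, the evaluation of the inner kinematic integral via \eqref{eq:euler}, \eqref{eq:kinematic}, Proposition \ref{prop:intrinsic_hyp} and Remark \ref{rem:propintrinsic} exactly as in \eqref{eq:innerintegral}, and the normalization at $r=0$ to cancel the $V_d$-term. No gaps.
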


Next we consider the mean visible volume.

\begin{table}[t]
    \centering
    \small
    \setlength{\tabcolsep}{12pt}
    \renewcommand{\arraystretch}{1.8}

    \begin{tabular}{lll}
        \toprule
        $d=2$ & $d=3$ & $d=4$ \\
        \midrule
        $\displaystyle\overline{\vol}_s(Z)=\frac{2\pi^3}{\gamma^2v_1^2-\pi^2}$ &
        $\displaystyle\overline{\vol}_s(Z)=\frac{512\pi}{\gamma v_2(\gamma^2 v_2^2-64)}$ &
        $\displaystyle\overline{\vol}_s(Z)=\frac{972\pi^6}{(9\pi^2-4\gamma^2 v_3^2)(81\pi^2-4\gamma^2 v_3^2)}$ \\
        \addlinespace
        \text{if }$\gamma v_1>\pi$ &
        \text{if }$\gamma v_2>8$ &
        \text{if }$\gamma v_3>9\pi/2$ \\
        \bottomrule
    \end{tabular}
    \caption{\small Mean visible volume in small space dimensions.}
    \label{tab:d=234}
\end{table}


\begin{theorem}\label{thm:VisVol}
    The mean visible volume $\overline{\vol}_s(Z)$ of a stationary Boolean model $Z$ with compact convex grains and positive intensity $\gamma$ is finite 
    if and only if 
$$v_{d-1}^\ast\gamma> d-1,\quad\text{or equivalently,}\quad v_{d-1}\gamma>(d-1){d\kappa_d\over\kappa_{d-1}}.$$
In this case, 
$$
\overline{\vol}_s(Z)={d!\kappa_d  \over 2^d}{\Gamma\big({v_{d-1}^\ast \gamma-d+1\over 2}\big)\over\Gamma\big({v_{d-1}^\ast \gamma+d+1\over 2}\big)}.
$$
\end{theorem}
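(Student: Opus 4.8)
The plan is to reduce the mean visible volume to a one–dimensional integral along the geodesic rays emanating from $\sfp$ and then to invoke the exponential law from Proposition~\ref{prop:exp}. First I would apply the polar integration formula of Lemma~\ref{lem:IntegrationFormulas}(b) at the base point $\sfp$ to the indicator of the (measurable, star-shaped) set $S_\sfp(Z)$:
$$
\mathcal{H}^d(S_\sfp(Z))=\int_{\bS_\sfp}\int_0^\infty \one\{\exp_\sfp(tu)\in S_\sfp(Z)\}\,\sinh^{d-1}(t)\,\dint t\,\mathcal{H}^{d-1}_\sfp(\dint u).
$$
Since $\exp_\sfp(tu)\in S_\sfp(Z)$ is equivalent to $[\sfp,\exp_\sfp(tu)]\cap Z=\varnothing$, which holds whenever $t<s_{\sfp,u}(Z)$ and fails whenever $t>s_{\sfp,u}(Z)$ (the single value $t=s_{\sfp,u}(Z)$ being Lebesgue-null), the inner integral equals $\int_0^{s_{\sfp,u}(Z)}\sinh^{d-1}(t)\,\dint t$. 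Taking the conditional expectation given $\sfp\notin Z$ and using Tonelli's theorem to pull the expectation inside the integral over $\bS_\sfp$ gives
$$
\overline{\vol}_s(Z)=\int_{\bS_\sfp}\E\!\left[\int_0^{s_{\sfp,u}(Z)}\sinh^{d-1}(t)\,\dint t\ \Big|\ \sfp\notin Z\right]\mathcal{H}^{d-1}_\sfp(\dint u).
$$

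By Proposition~\ref{prop:exp} the conditional distribution of $s_{\sfp,u}(Z)$ given $\sfp\notin Z$ is exponential with parameter $\beta:=v_{d-1}^\ast\gamma\in[0,\infty)$ and does not depend on $u$; hence the integrand above is constant in $u$, and writing $\omega_d=\mathcal{H}^{d-1}_\sfp(\bS_\sfp)=d\kappa_d$ and letting $S$ be exponentially distributed with parameter $\beta$, one more application of Tonelli's theorem (integrating out $s$ first) yields
$$
\overline{\vol}_s(Z)=\omega_d\,\E\!\left[\int_0^{S}\sinh^{d-1}(t)\,\dint t\right]
=\omega_d\int_0^\infty\sinh^{d-1}(t)\left(\int_t^\infty\beta e^{-\beta s}\,\dint s\right)\dint t
=\omega_d\int_0^\infty\sinh^{d-1}(t)\,e^{-\beta t}\,\dint t.
$$
Because $\sinh^{d-1}(t)\sim 2^{-(d-1)}e^{(d-1)t}$ as $t\to\infty$, this integral is finite if and only if $\beta>d-1$; for $\beta\le d-1$ it diverges, so $\overline{\vol}_s(Z)=\infty$ (consistently with the degenerate case $v_{d-1}=0$, where $\beta=0$ and $s_{\sfp,u}(Z)=\infty$ almost surely on $\{\sfp\notin Z\}$).

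Assume $\beta>d-1$. To evaluate $\int_0^\infty\sinh^{d-1}(t)e^{-\beta t}\,\dint t$ I would substitute $x=e^{-t}$ and then $y=x^2$, which turns it into $2^{-d}\int_0^1 y^{(\beta-d-1)/2}(1-y)^{d-1}\,\dint y=2^{-d}B\big(\tfrac{\beta-d+1}{2},d\big)$; alternatively, expand $\sinh^{d-1}(t)=2^{-(d-1)}\sum_{k=0}^{d-1}\binom{d-1}{k}(-1)^k e^{(d-1-2k)t}$ and integrate the $d$ exponentials termwise. Using $B(a,d)=\Gamma(a)\Gamma(d)/\Gamma(a+d)$, $\Gamma(d)=(d-1)!$, $\tfrac{\beta-d+1}{2}+d=\tfrac{\beta+d+1}{2}$, and multiplying by $\omega_d=d\kappa_d$, this gives
$$
\overline{\vol}_s(Z)=\frac{d!\,\kappa_d}{2^d}\,\frac{\Gamma\big(\tfrac{\beta-d+1}{2}\big)}{\Gamma\big(\tfrac{\beta+d+1}{2}\big)},\qquad \beta=v_{d-1}^\ast\gamma,
$$
which is the asserted formula; the equivalence $v_{d-1}^\ast\gamma>d-1\iff v_{d-1}\gamma>(d-1)\,d\kappa_d/\kappa_{d-1}$ is immediate from $v_{d-1}^\ast=\tfrac{\kappa_{d-1}}{d\kappa_d}v_{d-1}$. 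The computations here are routine; the points needing genuine care are the measurability of $S_\sfp(Z)$ and of the map $u\mapsto s_{\sfp,u}(Z)$ (for which I would refer to \cite[Section~4.6]{SW08}), the two Tonelli interchanges (harmless, as all integrands are nonnegative and jointly measurable), and verifying that the radial integral is genuinely $+\infty$—not merely that the closed form breaks down—when $\beta\le d-1$, which follows from the stated asymptotics of $\sinh^{d-1}$.
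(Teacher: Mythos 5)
Your proposal is correct and follows essentially the same route as the paper: polar integration at $\sfp$ via Lemma~\ref{lem:IntegrationFormulas}(b), the exponential law of Proposition~\ref{prop:exp}, a Tonelli interchange, and evaluation of $\int_0^\infty \sinh^{d-1}(t)e^{-\beta t}\,\dint t$ by the substitution $e^{-t}=\sqrt{y}$ into a beta integral. Your explicit verification that the radial integral genuinely diverges when $\beta\le d-1$ (including the degenerate case $\beta=0$) is a welcome bit of extra care that the paper leaves implicit.
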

\begin{proof}
Using Lemma \ref{lem:IntegrationFormulas} (b), Proposition \ref{prop:exp},  and by an application of Fubini's theorem, we deduce that
\begin{align*}
    \overline{\vol}_s(Z)&=\E[\mathcal{H}^d(S_\sfp(Z))\mid p\notin Z]\\
    &=\E \left[\int_{\bS_\sfp} \int_0^{s_{\sfp,u}(Z)}\sinh^{d-1}(s)\, \dint s\,\mathcal{H}^{d-1}_\sfp (\dint u)\mid \sfp\notin Z\right]\\
    &=\gamma v_{d-1}^\ast \int_{\bS_\sfp} \int_0^\infty \int_{0}^t \sinh^{d-1}(s)e^{-v_{d-1}^\ast \gamma\,  t}\, \dint s\,\dint t \,\mathcal{H}^{d-1}_\sfp (\dint u)\\
    &=\gamma v_{d-1}^\ast \omega_d\int_0^\infty \int_{0}^t \sinh^{d-1}(s)e^{-v_{d-1}^\ast \gamma\,  t}\, \dint s\,  \dint t\\
    &=\gamma v_{d-1}^\ast \omega_d\int_0^\infty \int_{s}^\infty e^{-v_{d-1}^\ast \gamma\,  t}\, \dint t\,  \sinh^{d-1}(s)\, \dint s\\
    &=\omega_d\int_0^\infty e^{-v_{d-1}^\ast \gamma\,  s} \sinh^{d-1}(s)\, \dint s.
\end{align*}
Note that the application of Fubini's theorem was justified, since the integrand is non-negative.

To evaluate this integral, let $a>d-1$ and apply the substitution $e^{-s}=\sqrt{u}$ to see that
\begin{align*}
    \int_0^\infty \sinh^{d-1}(s)\,e^{-as}\,\dint s &= {1\over 2^{d-1}}\int_0^\infty (e^s-e^{-s})^{d-1}\,e^{-as}\,\dint s\\
    &={1\over 2^{d-1}}\int_0^1\left({1\over\sqrt{u}}-\sqrt{u}\right)^{d-1}(\sqrt{u})^{a-1}\,{\dint u\over 2\sqrt{u}}\\
    &={1\over 2^d}\int_0^1 u^{{a-d+1\over 2}-1}(1-u)^{d-1}\,\dint u.
\end{align*}
The last integral can be expressed in terms of a beta function, and hence by gamma functions (as in the proof of \Cref{prop:ell_identity}), which finally implies that
\begin{align}\label{eq:OberwolfachIntegral}
    \int_0^\infty \sinh^{d-1}(s)\,e^{-as}\,\dint s = {(d-1)!\over 2^d}{\Gamma\left({a-d+1\over 2}\right)\over\Gamma\left({a+d+1\over 2}\right)},
\end{align}
provided that $a>d-1$. 
Using this identity with $a=v_{d-1}^\ast \gamma$ yields the final result.
\end{proof}

\begin{remark}\rm 
    Writing the result of Theorem \ref{thm:VisVol} in the form 
    $$
    \overline{\vol}_s(Z)={d!\kappa_d  \over 2^d}{\Gamma\big({v_{d-1}^\ast \gamma-d+1\over 2}\big)\over\Gamma\big({v_{d-1}^\ast \gamma-d+1\over 2}+d\big)},
    $$
    we obtain
$$
 \frac{\overline{\vol}_s(Z)}{d!\kappa_d}=  \begin{cases}
\left(\left((\gamma v_{d-1}^\ast)^2-(d-1)^2\right)\left((\gamma v_{d-1}^\ast)^2-(d-3)^2\right)\cdots \left((\gamma v_{d-1}^\ast)^2-1^2\right)\right)^{-1} &: d\text{ even,}\\
\left(\left((\gamma v_{d-1}^\ast)^2-(d-1)^2\right)\left((\gamma v_{d-1}^\ast)^2-(d-3)^2\right)\cdots \left((\gamma v_{d-1}^\ast)^2-2^2\right)\gamma v_{d-1}^\ast\right)^{-1} &: d\text{ odd.}
 \end{cases}
$$
For small vales of $d$, this is illustrated in Table \ref{tab:d=234}.
\end{remark}

\begin{remark}
{\rm Note that $V_{d-1}=2V_{d-1}^{\BH^d}$ by Remark \ref{rem:propintrinsic} , where the functional $V_{d-1}^{\BH^d}$ has the advantage of being normalized in an ``intrinsic way''. Thus, replacing correspondingly $v_{d-1}$ by $2v_{d-1}^{\BH^d}$, we obtain that the mean visible volume $\overline{\vol}_s(Z)$ is finite if and only if 
$$\gamma v_{d-1}^{\BH^d}>(d-1)\frac{d\kappa_d}{2\kappa_{d-1}}.$$  
}
\end{remark}

\begin{figure}[t]
    \centering
    \includegraphics[width=0.3\linewidth]{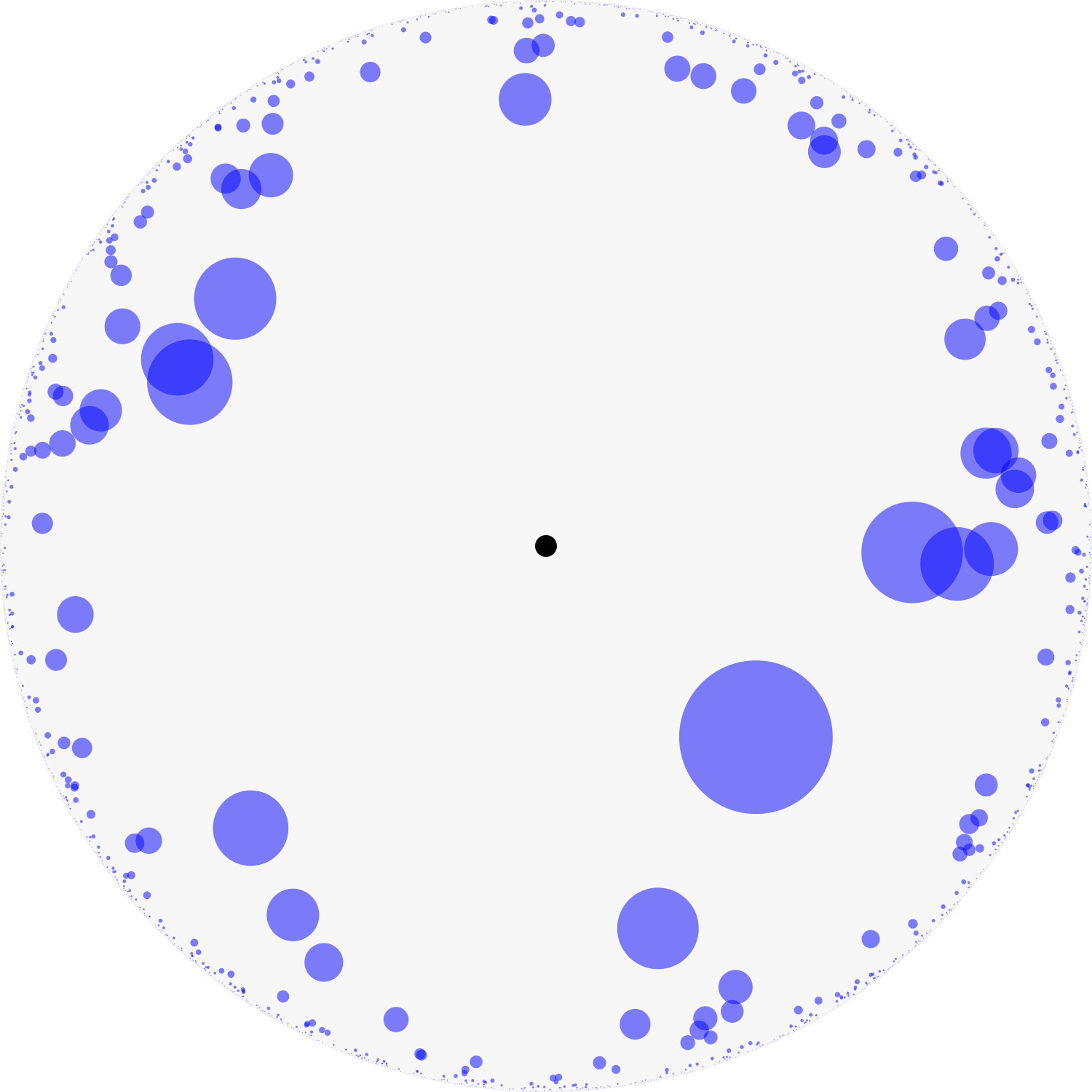}\quad
    \includegraphics[width=0.3\linewidth]{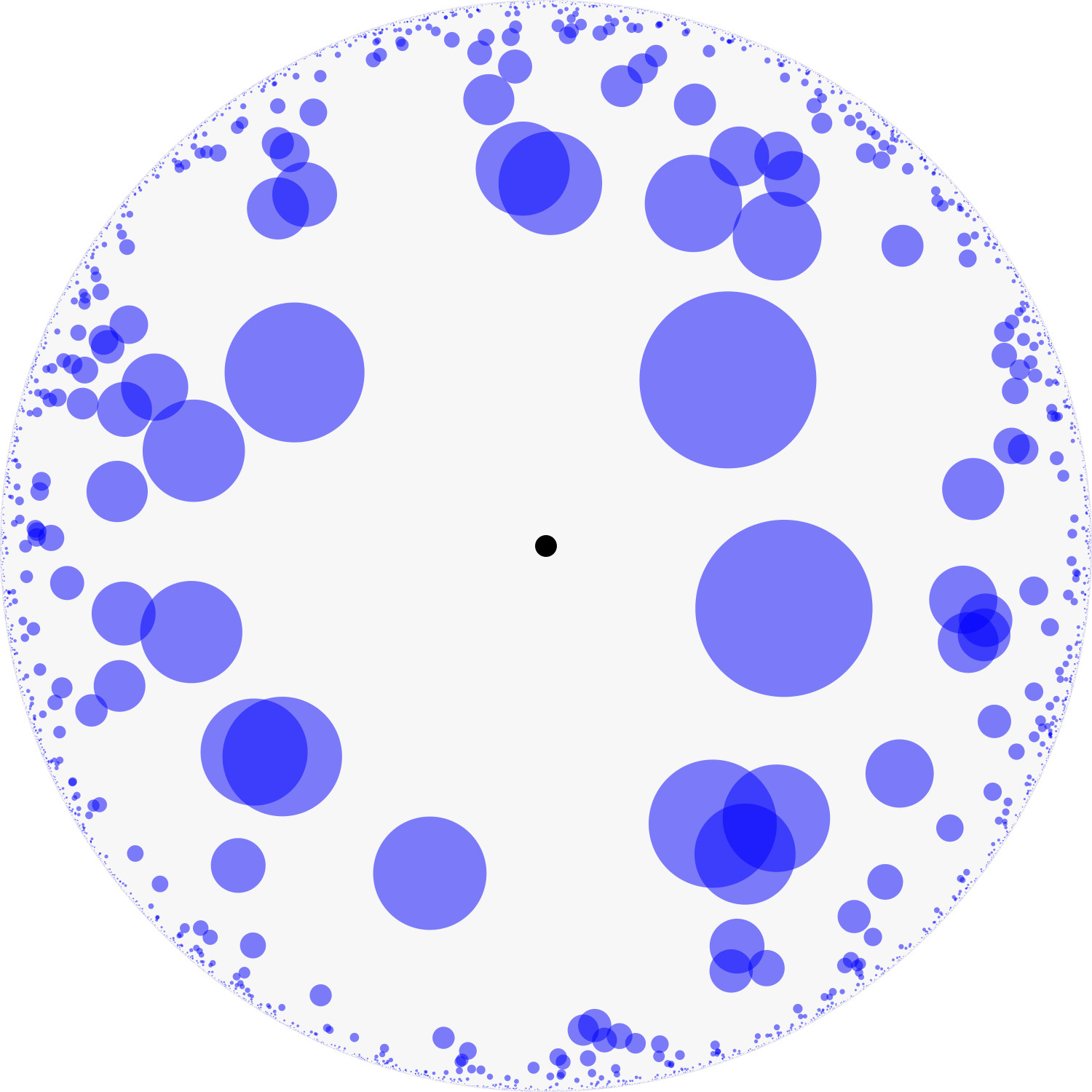}\quad
    \includegraphics[width=0.3\linewidth]{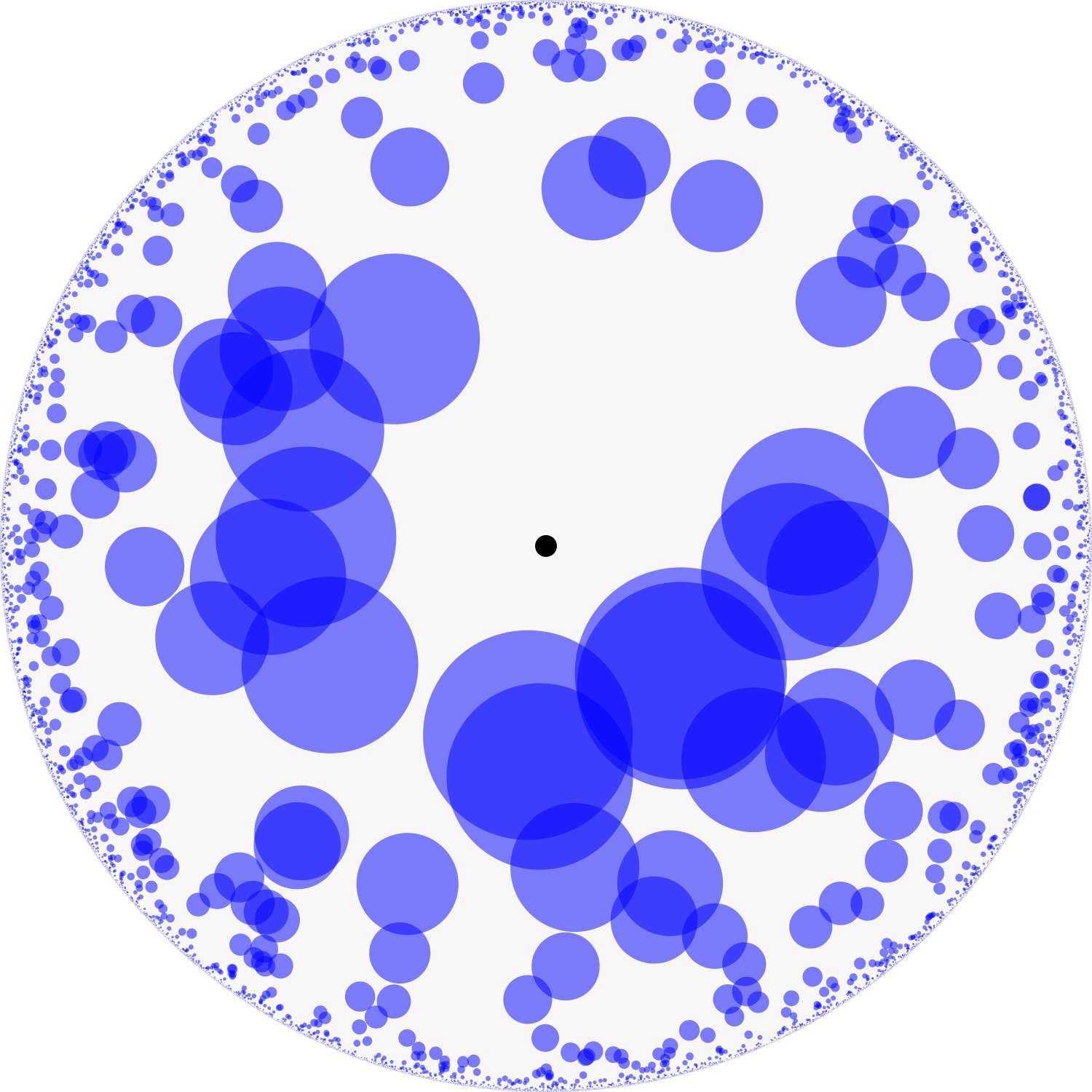}
    \caption{\small Simulations of Boolean models with intensity $0.5$ (left), $0.9595$ (middle) and $1.5$ (right) in the hyperbolic plane whose grains are discs of radius $1/2$. The saturation of a point represents the number of grains by which it is covered. The point $\sfp$ is located at the center.}
    \label{fig:BM2}
\end{figure}

\begin{remark}\label{rem:Visibility}
{\rm
Consider a Boolean model $Z$ generated by a stationary Poisson point process $\xi$ of intensity $\gamma>0$ in $\BH^d$ whose grains are balls with a fixed radius $R>0$, recall the construction from Section \ref{sec:ParticleProcesses}. In this special case, the threshold of Theorem \ref{thm:VisVol} may be compared to the one of \cite{Lyons} considering the absence or presence of infinite geodesic rays starting at $\sfp$ not hitting the Boolean model as discussed in the introduction. It has been shown on page 447 of \cite{Lyons} that such geodesic rays  with positive probability if the intensity $\gamma$  satisfies 
    $$
\gamma<{d-1\over\kappa_{d-1}\sinh^{d-1}(R)}=:\beta_c(d,R),
    $$
whereas for $\gamma>\beta_c(d,R)$ such rays do almost surely not exist;
    in the language of \cite{Lyons} one has to take $a=1$ and to note that $\gamma_1$ there is the same as our $\kappa_{d-1}$.
    On the other hand, Theorem \ref{thm:VisVol} shows that the mean visible volume $\overline{V}_s(Z)$ is infinite if and only if 
    $$
    \gamma\le {(d-1)d\kappa_d\over\kappa_{d-1}v_{d-1}}.
    $$
    Since $v_{d-1}=\omega_d\sinh^{d-1}(R)$ in our case, we arrive at precisely the same threshold $\beta_c(d,R)$. In this very special set-up, this  can alternatively be concluded as follows. Using Lemma \ref{lem:IntegrationFormulas} (b) and writing $x_t$ for a point with $d_h(\sfp,x_t)=t$ we have 
    \begin{align*}
    \overline{\vol}_s(Z) = \omega_d\int_0^\infty \BP([\sfp,x_t]\cap Z=\varnothing\,|\,\sfp\notin Z)\,\sinh^{d-1}t\,\dint t.
    \end{align*}
    Now, observe that
    \begin{align*}
    \BP([\sfp,x_t]\cap Z=\varnothing\,|\,\sfp\notin Z) ={\BP(\xi(\BB([\sfp,x_t],R))=0)\over\BP(\xi(\BB(\sfp,R))=0)}  = {e^{-\gamma V_d(\BB([\sfp,x_t],R))}\over e^{-\gamma V_d(\BB(\sfp,R))}}.
    \end{align*}
    Using \eqref{eq:Steinerhyp} and Remark \ref{rem:propintrinsic} we see that
    $$
    e^{-\gamma V_d(\BB([\sfp,x_t],R))} = e^{-\gamma\big(V_d(\BB(\sfp,R))+t\kappa_{d-1}\sinh^{d-1}(R)\big)} = e^{-\gamma\big(V_d(\BB(\sfp,R))+{(d-1)t\over\beta_c(d,R)}\big)},
    $$
    where in the last equality we applied the definition of $\beta_c(d,R)$.
    Thus,
    $$
    \overline{\vol}_s(Z) = \omega_d\int_0^\infty e^{-\gamma{(d-1)t\over\beta_c(d,R)}}\,\sinh^{d-1}t\,\dint t.
    $$
    Since $\sinh t$ is bounded from above and below by a constant multiple of $e^t$ for sufficiently large $t$, the last integral is finite if and only if $1-{\gamma\over\beta_c(d,R)}<0$, or equivalently, $\gamma>\beta_c(d,R)$. 
}
\end{remark}

In view of Theorem \ref{thm:ID}, we also arrive at the following corollary of Theorem \ref{thm:VisVol}.

\begin{corollary}
The mean visible volume of the Boolean model $Z$ with positive intensity and almost surely full-dimensional compact convex grains is finite if and only if the intersection density of the underlying Poisson particle process $\eta$ satisfies $\gamma_d(\eta)>(d-1)^d\kappa_d $.    
\end{corollary}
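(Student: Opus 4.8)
The plan is to derive the corollary directly from Theorem \ref{thm:ID} and Theorem \ref{thm:VisVol}, the only substantive point being to reconcile the geometric functionals entering the two statements. First I would invoke \eqref{eq3.2Vnm1}: for a compact convex set $G\subset\BH^d$ with nonempty interior one has $V_{d-1}(G)=\mathcal{H}^{d-1}(\partial G)$. Since $Z$ has almost surely full-dimensional compact convex grains, the distribution $\BQ$ of the typical grain is concentrated on such sets, so the quantity $v_{d-1}=\int_{\cK^d_\sfp}V_{d-1}(G)\,\BQ(\dint G)$ from \eqref{eq:def_v_d_minus_1} coincides with $\int_{\cC^d_\sfp}\mathcal{H}^{d-1}(\partial G)\,\BQ(\dint G)$, which is exactly the integral appearing in the formula for $\gamma_d(\eta)$ in Theorem \ref{thm:ID}.

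Next I would substitute this identification into the expression for the intersection density. Using $\omega_d=d\kappa_d$ together with the definition $v_{d-1}^\ast=\frac{\kappa_{d-1}}{d\kappa_d}v_{d-1}$, Theorem \ref{thm:ID} yields
$$
\gamma_d(\eta)=\kappa_d\left(\frac{\kappa_{d-1}}{d\kappa_d}\,\gamma\,v_{d-1}\right)^{\!d}=\kappa_d\,(v_{d-1}^\ast\gamma)^d .
$$
Because $\kappa_d>0$ and $v_{d-1}^\ast\gamma\ge 0$, the inequality $\gamma_d(\eta)>(d-1)^d\kappa_d$ is equivalent to $(v_{d-1}^\ast\gamma)^d>(d-1)^d$, hence to $v_{d-1}^\ast\gamma>d-1$.

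Finally, Theorem \ref{thm:VisVol} states that $\overline{\vol}_s(Z)$ is finite if and only if $v_{d-1}^\ast\gamma>d-1$. Combining this with the equivalence just obtained gives the assertion. There is no genuine obstacle here beyond keeping track of the normalizing constants $\kappa_d,\kappa_{d-1},\omega_d$; the one step that requires a word of care is the identification of $v_{d-1}$ with the boundary-area integral in $\gamma_d(\eta)$, which genuinely uses full-dimensionality of the grains via \eqref{eq3.2Vnm1} (for a grain of dimension at most $d-1$ one would instead have $V_{d-1}(G)=2\mathcal{H}^{d-1}(\partial G)$, altering the constant in the threshold).
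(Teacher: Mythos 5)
Your proposal is correct and follows exactly the route the paper intends: identify $\int\mathcal{H}^{d-1}(\partial G)\,\BQ(\dint G)$ with $v_{d-1}$ via \eqref{eq3.2Vnm1} (which is where full-dimensionality enters), rewrite $\gamma_d(\eta)=\kappa_d(v_{d-1}^\ast\gamma)^d$, and apply Theorem \ref{thm:VisVol}. The bookkeeping of the constants $\omega_d=d\kappa_d$ and the remark about the factor $2$ for lower-dimensional grains are both accurate.
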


{Finally, we consider the behavior of the mean visible volume near the critical threshold. Our next result shows that there is a universal scaling law. In particular, the divergence of $\overline{\vol}_s(Z)$ at criticality is polynomial (rather than exponential), with critical exponent one, independent of the dimension $d$. In what follows, we write $f(x)\sim g(x)$ as $x\to x_0$ if $f(x)/g(x)\to 1$ as $x\to x_0$.

\begin{corollary}\label{cor:AboveCriticality}
Let $Z$ be a stationary Boolean model in $\BHd$ with compact convex grains and intensity $\gamma>0$, and set $a\defeq v_{d-1}^\ast\gamma$. 
Assume that $a>d-1$ and define $\delta\defeq a-(d-1)>0$. Then
\begin{equation}\label{eq:critical_asymptotic}
\overline{\vol}_s(Z)\sim \frac{\omega_d}{2^{d-1}}\cdot\frac{1}{\delta}
\qquad\text{as }\delta\downarrow 0.
\end{equation}
\end{corollary}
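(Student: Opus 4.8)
The plan is to read off the asymptotics directly from the closed-form expression for $\overline{\vol}_s(Z)$ obtained in Theorem \ref{thm:VisVol}. Recall that with $a=v_{d-1}^\ast\gamma$ we have
\[
\overline{\vol}_s(Z)=\frac{d!\kappa_d}{2^d}\cdot\frac{\Gamma\!\big(\frac{a-d+1}{2}\big)}{\Gamma\!\big(\frac{a+d+1}{2}\big)}
=\frac{d!\kappa_d}{2^d}\cdot\frac{\Gamma\!\big(\frac{\delta}{2}\big)}{\Gamma\!\big(\frac{\delta}{2}+d\big)},
\]
since $a-d+1=\delta$ and $a+d+1=\delta+2d$. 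As $\delta\downarrow 0$, the numerator has a simple pole: using the standard expansion $\Gamma(z)\sim 1/z$ as $z\to 0$, we get $\Gamma(\delta/2)\sim 2/\delta$. The denominator is continuous at $\delta=0$ and tends to $\Gamma(d)=(d-1)!$. Hence
\[
\overline{\vol}_s(Z)\sim\frac{d!\kappa_d}{2^d}\cdot\frac{2/\delta}{(d-1)!}
=\frac{d\kappa_d}{2^{d-1}}\cdot\frac{1}{\delta}
=\frac{\omega_d}{2^{d-1}}\cdot\frac{1}{\delta},
\]
using $\omega_d=d\kappa_d$. This is exactly \eqref{eq:critical_asymptotic}.

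Concretely, the steps are: first, substitute $a=\delta+(d-1)$ into the formula of Theorem \ref{thm:VisVol} to rewrite the $\Gamma$-quotient in terms of $\delta$ alone; second, invoke $\lim_{z\to 0}z\,\Gamma(z)=1$ (equivalently $\Gamma(z+1)=z\Gamma(z)$ with $\Gamma$ continuous and nonzero at $1$) to obtain $\Gamma(\delta/2)=\tfrac{2}{\delta}(1+o(1))$ as $\delta\downarrow 0$; third, use continuity of $\Gamma$ at $d\ge 2$ to replace $\Gamma(\delta/2+d)$ by $\Gamma(d)=(d-1)!$ up to a factor $1+o(1)$; fourth, combine and simplify the constant $\frac{d!\kappa_d}{2^d}\cdot\frac{2}{(d-1)!}=\frac{d\kappa_d}{2^{d-1}}=\frac{\omega_d}{2^{d-1}}$.

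There is essentially no obstacle here; the result is an immediate corollary of the explicit formula, and the only ingredients are the pole of the Gamma function at the origin and its continuity elsewhere on the positive axis. The one point worth stating carefully is that the interchange of the limit with the ratio is legitimate because both $\delta\,\Gamma(\delta/2)\to 2$ and $\Gamma(\delta/2+d)\to\Gamma(d)$ hold with finite nonzero limits, so the quotient $\delta\cdot\overline{\vol}_s(Z)$ converges to $\frac{d!\kappa_d}{2^d}\cdot\frac{2}{\Gamma(d)}=\frac{\omega_d}{2^{d-1}}$, which is precisely the assertion $f(\delta)\sim g(\delta)$ with $g(\delta)=\frac{\omega_d}{2^{d-1}}\cdot\frac{1}{\delta}$.
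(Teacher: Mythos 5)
Your argument is correct and coincides with the paper's own proof: both substitute $a=\delta+(d-1)$ into the formula of Theorem \ref{thm:VisVol}, use $\Gamma(\varepsilon)\sim\varepsilon^{-1}$ as $\varepsilon\downarrow 0$ together with continuity of $\Gamma$ at $d$, and simplify the constant via $\omega_d=d\kappa_d$. Nothing further is needed.
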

\begin{proof}
By Theorem \ref{thm:VisVol} we may write, for $a>d-1$,
\begin{equation}\label{eq:gamma_form_cor}
\overline{\vol}_s(Z)=\frac{d!\kappa_d}{2^d}\,
\frac{\Gamma\big(\frac{a-d+1}{2}\big)}{\Gamma\big(\frac{a+d+1}{2}\big)}.
\end{equation}
Since $\delta\defeq a-(d-1)>0$, we have
\[
\frac{a-d+1}{2}=\frac{\delta}{2}
\qquad\text{and}\qquad
\frac{a+d+1}{2}=d+\frac{\delta}{2}.
\]
Using the asymptotics $\Gamma(\varepsilon)\sim \varepsilon^{-1}$ as $\varepsilon\downarrow 0$ and the continuity of the gamma function at $d$, we obtain
\[
\Gamma\Big(\frac{\delta}{2}\Big)\sim \frac{2}{\delta}
\qquad\text{and}\qquad
\Gamma\Big(d+\frac{\delta}{2}\Big)\to \Gamma(d)=(d-1)!\qquad\text{as }\delta\downarrow 0.
\]
Inserting this into \eqref{eq:gamma_form_cor} yields
\[
\overline{\vol}_s(Z)\sim \frac{d!\kappa_d}{2^d}\cdot \frac{2/\delta}{(d-1)!}
=\frac{d\kappa_d}{2^{d-1}}\cdot\frac{1}{\delta}.
\]
Since $\omega_d=d\kappa_d$, this proves \eqref{eq:critical_asymptotic}.
\end{proof}

A next natural question is to quantify the divergence of the mean visible volume \textit{at} the critical threshold in a more refined way. Since $\overline{\vol}_s(Z)=\infty$ when $v_{d-1}^\ast\gamma=d-1$, it is natural to localize the visible region and to study how the expected visible volume grows with the observation radius. To this end, for $R>0$ we consider the truncated mean visible volume
$$
\overline{\vol}_{s,R}(Z)\defeq \E\big[\mathcal{H}^d\big(S_\sfp(Z)\cap \BB(\sfp,R)\big)\mid \sfp\notin Z\big].
$$
Our next result shows that the divergence is governed by a universal constant, independent of the concrete shapes of the particles, and is of a mild order: at criticality the truncated mean visible volume grows only linearly in $R$. For completeness, we also consider the asymptotics of the mean visible volume below and above criticality, which in these cases is of exponential order and depends on $\gamma v_{d-1}^\ast$.

\begin{corollary}\label{cor:AtCriticality}
Let $Z$ be a stationary Boolean model in $\BHd$ with compact convex grains and intensity $\gamma>0$, and set $a\defeq v_{d-1}^\ast\gamma$. 
Then, as $R\to\infty$, the following asymptotics hold.
\begin{enumerate}
\item[\normalfont{(a)}] If $a<d-1$, then
\begin{equation}\label{eq:subcritical_trunc_asymptotic}
\overline{\vol}_{s,R}(Z)\sim \frac{\omega_d}{2^{d-1}(d-1-a)}\,e^{(d-1-a)R}.
\end{equation}
\item[\normalfont{(b)}] If $a=d-1$, then
\begin{equation}\label{eq:critical_trunc_asymptotic}
\overline{\vol}_{s,R}(Z)\sim \frac{\omega_d}{2^{d-1}}\,R.
\end{equation}
\item[\normalfont{(c)}] If $a>d-1$, then 
\begin{equation}\label{eq:supercritical_trunc_asymptotic}
\overline{\vol}_s(Z)-\overline{\vol}_{s,R}(Z)\sim \frac{\omega_d}{2^{d-1}(a-d+1)}\,e^{-(a-d+1)R}.
\end{equation}
\end{enumerate}
\end{corollary}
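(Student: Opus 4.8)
The plan is to mimic the proof of \Cref{thm:VisVol}: reduce $\overline{\vol}_{s,R}(Z)$ to a one-dimensional integral and then read off the asymptotics termwise. First I would record a truncated analogue of the representation obtained there. By Lemma \ref{lem:IntegrationFormulas}(b), a point $\exp_\sfp(su)$ with $s\ge 0$ and $u\in\bS_\sfp$ lies in the visible region $S_\sfp(Z)$ precisely when $s<s_{\sfp,u}(Z)$ (the value $s=s_{\sfp,u}(Z)$ contributing a null set), so that
$$
\mathcal{H}^d\big(S_\sfp(Z)\cap\BB(\sfp,R)\big)=\int_{\bS_\sfp}\int_0^{\min\{s_{\sfp,u}(Z),\,R\}}\sinh^{d-1}(s)\,\dint s\,\mathcal{H}^{d-1}_\sfp(\dint u).
$$
Conditioning on $\sfp\notin Z$, interchanging the conditional expectation with the (nonnegative) integrals by Tonelli's theorem, and invoking \Cref{prop:exp} — according to which, given $\sfp\notin Z$, the range $s_{\sfp,u}(Z)$ is exponentially distributed with parameter $a=v_{d-1}^\ast\gamma$ and independent of $u$ — together with the elementary identity $\int_0^{\min\{T,R\}}g(s)\,\dint s=\int_0^{R}g(s)\,\one\{T>s\}\,\dint s$ valid for measurable $g\ge 0$, I would arrive at
$$
\overline{\vol}_{s,R}(Z)=\omega_d\int_0^{R}\sinh^{d-1}(s)\,e^{-as}\,\dint s.
$$
Letting $R\to\infty$ recovers the formula of \Cref{thm:VisVol} in the regime $a>d-1$, which serves as a consistency check. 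The measurability and conditioning points here are exactly those already handled for \Cref{thm:VisVol} and \Cref{prop:exp}.

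Next I would expand the integrand. From $\sinh^{d-1}(s)=2^{-(d-1)}\sum_{k=0}^{d-1}\binom{d-1}{k}(-1)^k e^{(d-1-2k)s}$ we get
$$
\sinh^{d-1}(s)\,e^{-as}=\frac{1}{2^{d-1}}\sum_{k=0}^{d-1}\binom{d-1}{k}(-1)^k e^{(d-1-2k-a)s},
$$
whose exponents $d-1-2k-a$ are strictly decreasing in $k$, the leading one (at $k=0$) being $d-1-a$. In case (a), $a<d-1$: the $k=0$ term gives $\int_0^R e^{(d-1-a)s}\,\dint s=\frac{e^{(d-1-a)R}-1}{d-1-a}\sim\frac{1}{d-1-a}e^{(d-1-a)R}$, while for each $k\ge1$ one has $\int_0^R e^{(d-1-2k-a)s}\,\dint s=o(e^{(d-1-a)R})$ because $d-1-2k-a<d-1-a$; together with the prefactor $\omega_d/2^{d-1}$ this yields \eqref{eq:subcritical_trunc_asymptotic}. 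In case (b), $a=d-1$: the $k=0$ term contributes $\int_0^R 1\,\dint s=R$, while all other exponents equal $-2k<0$, so those integrals stay bounded as $R\to\infty$; this gives \eqref{eq:critical_trunc_asymptotic}. In case (c), $a>d-1$: \Cref{thm:VisVol} gives $\overline{\vol}_s(Z)<\infty$, and subtracting the two integral representations yields $\overline{\vol}_s(Z)-\overline{\vol}_{s,R}(Z)=\omega_d\int_R^\infty\sinh^{d-1}(s)\,e^{-as}\,\dint s$; here the $k=0$ term $\int_R^\infty e^{(d-1-a)s}\,\dint s=\frac{e^{-(a-d+1)R}}{a-d+1}$ dominates, the $k\ge1$ terms being exponentially smaller, which gives \eqref{eq:supercritical_trunc_asymptotic}.

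Once the representation $\overline{\vol}_{s,R}(Z)=\omega_d\int_0^R\sinh^{d-1}(s)e^{-as}\,\dint s$ is in place, the argument is essentially bookkeeping. The only step requiring a modicum of care is case (a): when $d-3-a\ge 0$ some subleading terms still grow (at least linearly, possibly exponentially) in $R$, and one must observe that they nonetheless grow strictly slower than $e^{(d-1-a)R}$ and are therefore negligible. This is the main — and quite mild — obstacle.
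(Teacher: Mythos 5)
Your proof is correct and follows essentially the same route as the paper: both reduce the claim to the representation $\overline{\vol}_{s,R}(Z)=\omega_d\int_0^R e^{-as}\sinh^{d-1}(s)\,\dint s$ (and its tail analogue in case (c)) via Lemma \ref{lem:IntegrationFormulas}(b) and Proposition \ref{prop:exp}, and then isolate the dominant exponential $e^{(d-1-a)s}$ in the integrand. The only difference is cosmetic: you expand $\sinh^{d-1}$ by the binomial theorem and integrate termwise, whereas the paper factors $e^{-as}\sinh^{d-1}(s)=2^{-(d-1)}e^{-(a-d+1)s}(1-e^{-2s})^{d-1}$ and applies l'Hospital's rule.
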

\begin{proof}
By Lemma \ref{lem:IntegrationFormulas} (b) and Proposition \ref{prop:exp}, the same computation as in the proof of Theorem \ref{thm:VisVol} yields
\begin{equation}\label{eq:trunc_rep}
\overline{\vol}_{s,R}(Z)=\omega_d\int_0^{R} e^{-as}\sinh^{d-1}(s)\,\dint s.
\end{equation}
Moreover, if $a>d-1$, then Theorem \ref{thm:VisVol} implies that
\begin{equation}\label{eq:tail_rep}
\overline{\vol}_s(Z)-\overline{\vol}_{s,R}(Z)=\omega_d\int_R^{\infty} e^{-as}\sinh^{d-1}(s)\,\dint s.
\end{equation}
Using $\sinh (s)=\frac12(e^s-e^{-s})$,  for all $s\ge 0$ we have
\begin{equation}\label{eq:integrand_factor}
e^{-as}\sinh^{d-1}(s)
=2^{-(d-1)}e^{-(a-d+1)s}\big(1-e^{-2s}\big)^{d-1}.
\end{equation}
In particular,
\begin{equation}\label{eq:integrand_asymptotic}
e^{-as}\sinh^{d-1}(s)\sim 2^{-(d-1)}e^{-(a-d+1)s}
\qquad\text{as }s\to\infty.
\end{equation}

\smallskip
\noindent \normalfont{(a)} Assume $a<d-1$ and set $\beta\defeq d-1-a>0$. Let
\[
I(R)\defeq \int_0^{R} e^{-as}\sinh^{d-1}(s)\,\dint s.
\]
By \eqref{eq:integrand_asymptotic} we have $I(R)\to\infty$ and $I(R)/e^{\beta R}\to {1\over\beta}2^{-(d-1)}$ as $R\to\infty$,
which can be seen by l'Hospital's rule:
\[
\lim_{R\to\infty}\frac{I(R)}{e^{\beta R}}
=\lim_{R\to\infty}\frac{e^{-aR}\sinh^{d-1}(R)}{\beta e^{\beta R}}
=\frac{1}{\beta}\lim_{R\to\infty}\frac{\sinh^{d-1}(R)}{e^{(d-1)R}}
=\frac{1}{\beta}\cdot\frac{1}{2^{d-1}
},
\]
where in the last step we used that $\sinh (R)\sim \frac12 e^R$.
Multiplying by $\omega_d$ and using \eqref{eq:trunc_rep}, we conclude  \eqref{eq:subcritical_trunc_asymptotic}.

\smallskip
\noindent\normalfont{(b)}
Assume $a=d-1$. Then \eqref{eq:integrand_factor} gives
\[
e^{-(d-1)s}\sinh^{d-1}(s)=2^{-(d-1)}\big(1-e^{-2s}\big)^{d-1}.
\]
Inserting this into \eqref{eq:trunc_rep} and using L'Hospital's rule, we obtain
\[
\overline{\vol}_{s,R}(Z)=\frac{\omega_d}{2^{d-1}}\int_0^R \big(1-e^{-2s}\big)^{d-1}\,\dint s \sim \frac{\omega_d}{2^{d-1}} R,
\]
which yields \eqref{eq:critical_trunc_asymptotic}.

\smallskip
\noindent\normalfont{(c)}
Assume $a>d-1$ and set $\beta\defeq a-d+1>0$. If
\[
J(R)\defeq \int_R^{\infty} e^{-as}\sinh^{d-1}(s)\,\dint s,
\]
then $J(R)\downarrow 0$ as $R\to \infty$. Again by L'Hospital's rule and using  \eqref{eq:integrand_asymptotic}, we get $J(R)/e^{-\beta R}\to {1\over\beta}2^{-(d-1)}$ as $R\to\infty$. 
Multiplying by $\omega_d$ and using \eqref{eq:tail_rep}, we obtain \eqref{eq:supercritical_trunc_asymptotic}.
\end{proof}}

\section{Comparison with hyperbolic Poisson hyperplane tessellations}

We consider a Poisson process \(\xi\) of hyperplanes in \(\BHd\) with intensity \(\gamma>0\).
By this we mean that $\xi$ is a Poisson process in the space \(A_h(d,d-1)\) with intensity measure \(\gamma \mu_{d-1}\), where
\[
\mu_{d-1}(\,\cdot\,) = \int_{G_h(d,1)} \int_L \cosh^{d-1}(d_h(x,\sfp)) \I\{H(L,x) \in \,\cdot\,\} \,\mathcal{H}^1(\dint x) \,\nu_1(\dint L),
\]
is the isometry invariant measure on the space of hyperplanes in $\BHd$, normalized as in \cite{BHT,HHT21}.
Here, \(G_h(d,1)\) is the space of lines (that is, $1$-planes) containing \(\sfp\), \(\nu_1\) denotes the unique Borel probability measure on \(G_h(d,1)\) invariant under isometries fixing \(\sfp\) and \(H(L,x)\) is the hyperplane orthogonal to \(L\) at \(x\). We refer to \cite{BHT,HeroldDiss,HeHu+,HHT21} for more details.
In this section, let
\begin{equation}\label{eq:Zpht}
Z^\ast := \bigcup_{H \in \xi} H    
\end{equation}
be the random closed set in $\BHd$ induced by the union of all hyperplanes of $\xi$. Since $\mu_{d-1}$ is isometry invariant, the law of the random set $Z$ is also invariant under all isometries of $\BHd$.
We call the closure of the almost surely unique connected component in the complement of $Z^\ast$ containing $\sfp$, that is, the cell in the tessellation generated by \(\xi\) that contains \(\sfp\), the \textit{zero cell} of the hyperbolic Poisson hyperplane tessellation with intensity $\gamma$. By definition, the zero cell coincides with the closure of the visible region \(S_\sfp(Z^\ast)\) induced by the random closed set $Z^\ast$ defined via \eqref{eq:Zpht}.

Defining the visibility range $s_{\sfp,u}(Z^\ast)$ induced by $Z^\ast$ from $\sfp$ in direction $u\in\mathbb{S}^{d-1}_\sfp$ as in \eqref{eq:VisRange}, we can argue as in \eqref{eq:chi1} to deduce that
\begin{align*}
\BP(s_{\sfp,u}(Z^\ast)\le r)
&=1-\BP\left(Z^\ast\cap [\sfp,\exp_\sfp(ru)]=\varnothing\right)\nonumber\\   
&=1-\BP\left(\xi\left(\mathcal{F}_{[\sfp,\exp_\sfp(ru)]}\right)=0\right)\nonumber\\
&=1-\exp\left(-\gamma \cdot \mu_{d-1}\left(\mathcal{F}_{[\sfp,\exp_\sfp(ru)]}\right)\right)\nonumber\\
&=1-\exp\left(-\gamma \cdot \int_{A_h(d,d-1)}\I\left\{ [\sfp,\exp_\sfp(ru)]\cap H \neq\varnothing\right\} \, \mu_{d-1}(\dint H) \right).
\end{align*}
Next, we use that the geodesic segment $[\sfp,\exp_\sfp(ru)]$ can be intersected in at most one point by a generic hyperplane $H\in A_h(d,d-1)$, which implies that $\I\{ [\sfp,\exp_\sfp(ru)]\cap H \neq\varnothing\}=\mathcal{H}^0( [\sfp,\exp_\sfp(ru)]\cap H )$ for $\mu_{d-1}$-almost all $H\in A_h(d,d-1)$. Thus, by the Crofton formula (see, e.g., \cite[Lemma 2]{HHT21} or \cite[Lemma 2.1]{BHT}), the integral in the exponent is equal to
\[ \int_{A_h(d,d-1)}\mathcal{H}^0\left( [\sfp,\exp_\sfp(ru)]\cap H \right) \, \mu_{d-1}(\dint H)
    = \frac{\omega_{d+1}\omega_{1}}{\omega_{d}\omega_{2}} \mathcal{H}^{1}([p,\exp_p(ru)]) = \frac{2\kappa_{d-1}}{d\kappa_d} r.\]
It follows that the random variable \(s_{\sfp,u}(Z)\) has an exponential distribution with parameter \(\gamma\frac{2\kappa_{d-1}}{d\kappa_d}\).
We can now proceed exactly as in the proof of Theorem \ref{thm:VisVol} to obtain
\begin{align*}
\overline{\vol}_s(Z^\ast) &=\E \int_{\mathbb{S}_\sfp^{d-1}}\int_0^{s_{\sfp,u}(Z^\ast)}\sinh^{d-1}(s)\,\dint s\,\mathcal{H}_{\sfp}^{d-1}(\dint u)\\
&=\omega_d\int_0^\infty\int_0^t\sinh^{d-1}(s)\,\gamma{2\kappa_{d-1}\over d\kappa_d}\exp\Big(-\gamma{2\kappa_{d-1}\over d\kappa_d}\,t\Big)\,\dint s\,\dint t\\
&=2\gamma\kappa_{d-1}\int_0^\infty\int_s^\infty \sinh^{d-1}(s)\exp\Big(-\gamma{2\kappa_{d-1}\over d\kappa_d}\,t\Big)\,\dint t\,\dint s\\
&=\omega_d\int_0^\infty \sinh^{d-1}(s)\,\exp\Big(- \gamma\frac{2\kappa_{d-1}}{d\kappa_d}  s\Big)\,\dint s. 
\end{align*}
The last expression can be further simplified by means of relation \eqref{eq:OberwolfachIntegral}. For example, if \(d=2\), we get $\overline{\vol}_s(Z^\ast)={2\pi^3\over 4\gamma^2-\pi^2}$, which agrees with \cite[Equations (6.4) and (6.6)]{SantaloYanez}, where we note that the authors of that paper use a different normalization for the invariant measure $\mu_{d-1}$. 
In particular, defining
$$
\gamma_d^\ast:=\frac{2 \kappa_{d-1}}{d \kappa_d}\gamma,
$$
we see that the mean volume of the zero cell is finite if and only if
\begin{equation}\label{eq:PHTThreshold}
\gamma_d^\ast >  d-1 .   
\end{equation}
We summarize these findings in the following result.

\begin{theorem}\label{Thm7.1}
Consider the zero cell induced by a stationary Poisson process on $A_h(d,d-1)$ with intensity measure $\gamma\cdot\mu_{d-1}$, $\gamma>0$. Its mean volume $\overline{\vol}_s(Z^\ast)$ is finite if and only if $\gamma_d^\ast>d-1$, and in this case it is given by 
$$
\overline{\vol}_s(Z^\ast) ={d!\kappa_d\over 2^d}{\Gamma\big({\gamma_d^\ast-d+1\over 2}\big)\over\Gamma\big({\gamma_d^\ast+d+1\over 2}\big)}.
$$
\end{theorem}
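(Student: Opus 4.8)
The theorem is essentially a bookkeeping exercise, since all of the required computations have already been carried out in the paragraphs preceding its statement; the plan is to assemble them cleanly. First I would observe that, because $\mathcal{H}^d$-almost every point of $\BHd$ (and in particular $\sfp$, up to a null event in the law of $\xi$) lies on no hyperplane of $\xi$, the zero cell equals the closure of the visible region $S_\sfp(Z^\ast)$ almost surely, so that $\overline{\vol}_s(Z^\ast)$ is an ordinary (unconditional) expectation. Then, using the Poisson void probability together with the Crofton formula for $\mu_{d-1}$ — and the elementary fact that a geodesic segment meets a generic hyperplane in at most one point, so that $\I\{[\sfp,\exp_\sfp(ru)]\cap H\neq\varnothing\}=\mathcal{H}^0([\sfp,\exp_\sfp(ru)]\cap H)$ for $\mu_{d-1}$-a.e.\ $H$ — I would record that the visibility range $s_{\sfp,u}(Z^\ast)$ is exponentially distributed with parameter $\gamma_d^\ast=\frac{2\kappa_{d-1}}{d\kappa_d}\gamma$, independently of $u$. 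This plays the role of Proposition \ref{prop:exp} in the present hyperplane setting.

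Next I would apply the polar integration formula Lemma \ref{lem:IntegrationFormulas}(b) to write $\mathcal{H}^d(S_\sfp(Z^\ast))=\int_{\bS_\sfp}\int_0^{s_{\sfp,u}(Z^\ast)}\sinh^{d-1}(s)\,\dint s\,\mathcal{H}^{d-1}_\sfp(\dint u)$, take expectations, use the isotropy of $\xi$ to remove the dependence on $u$ (producing the factor $\omega_d$), invoke Fubini's theorem (legitimate by nonnegativity of the integrand), and interchange the $s$- and $t$-integrations exactly as in the proof of Theorem \ref{thm:VisVol}. This reduces the problem to evaluating the one-dimensional integral
\begin{equation*}
\overline{\vol}_s(Z^\ast)=\omega_d\int_0^\infty\sinh^{d-1}(s)\,e^{-\gamma_d^\ast s}\,\dint s.
\end{equation*}

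Finally I would invoke the integral identity \eqref{eq:OberwolfachIntegral} with $a=\gamma_d^\ast$. Since $\sinh^{d-1}(s)\,e^{-\gamma_d^\ast s}\sim 2^{-(d-1)}e^{-(\gamma_d^\ast-d+1)s}$ as $s\to\infty$, the integral is finite exactly when $\gamma_d^\ast>d-1$, which yields the finiteness criterion \eqref{eq:PHTThreshold}; in that range \eqref{eq:OberwolfachIntegral} evaluates it in terms of gamma functions, and after multiplying by $\omega_d=d\kappa_d$ and using $\omega_d(d-1)!=d!\kappa_d$ one obtains the displayed closed form. The only points requiring a word of justification are the measurability and integrability needed for the Fubini step (handled by nonnegativity) and the divergence of the integral when $\gamma_d^\ast\le d-1$ (a direct comparison with $e^{-(\gamma_d^\ast-d+1)s}$, as already used in Corollary \ref{cor:AtCriticality}). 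I do not anticipate any genuine obstacle, since the statement is a consolidation of computations already in hand.
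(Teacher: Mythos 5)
Your proposal is correct and follows essentially the same route as the paper: void probability plus the Crofton formula to show $s_{\sfp,u}(Z^\ast)$ is exponential with parameter $\gamma_d^\ast$, then the polar integration formula, Fubini, and the identity \eqref{eq:OberwolfachIntegral} to evaluate $\omega_d\int_0^\infty\sinh^{d-1}(s)e^{-\gamma_d^\ast s}\,\dint s$ and read off the finiteness threshold. No gaps.
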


We note that this result is in line with \cite[Theorem 3.3]{GKT} (and the references cited therein), which states that the random set $S_\sfp(Z^\ast)$ is bounded almost surely if $\gamma_d^\ast$ satisfies \eqref{eq:PHTThreshold}, while $S_\sfp(Z^\ast)$ is unbounded almost surely if $\gamma_d^\ast<d-1$ (note the difference by a factor $2$ compared to \cite{GKT}, which is due to a slightly different normalization of the invariant measure $\mu_{d-1}$). If $d=2$, then \cite[Theorem 3.4]{GKT} even ensures that in the critical case $\gamma_2^\ast=1$ (or equivalently $\gamma=\frac{\pi}{2}$) the random set $S_\sfp(Z^\ast)$ is bounded with probability one; in this situation, the expected value $\overline{\vol}_s(Z^\ast) $ is still infinite, as follows from Theorem \ref{Thm7.1}.

{\begin{remark}\label{rem:zero_cell_critical}\rm
Theorem \ref{Thm7.1} yields the same near-critical scaling as in the Boolean model discussed in Corollaries \ref{cor:AboveCriticality} and \ref{cor:AtCriticality}. Writing $\delta\defeq \gamma_d^\ast-(d-1)$,
we obtain
\[
\overline{\vol}_s(Z^\ast)\sim \frac{\omega_d}{2^{d-1}}\cdot\frac{1}{\delta}
\qquad\text{as }\delta\downarrow 0.
\]
In particular, $\overline{\vol}_s(Z^\ast)=\infty$ at $\gamma_d^\ast=d-1$. Moreover, we can consider 
$
\overline{\vol}_{s,R}(Z^\ast)\defeq \E\big[\mathcal{H}^d\big(Z^\ast\cap \BB(\sfp,R)\big)\big]
$
for $R>0$. Then $\overline{\vol}_{s,R}(Z^\ast)$ satisfies exactly the same large $R$ asymptotics as the visibility set of the Boolean model in Corollary \ref{cor:AtCriticality}.
\end{remark}}

\subsection*{Acknowledgements}
This work has been supported by the DFG priority programme SPP2265 ``Random Geometric Systems''.

\end{document}